\newcounter{dummy} \numberwithin{dummy}{section}
\newtheorem{theo}[dummy]{Theorem}
\newtheorem{coro}[dummy]{Corollary}
\newtheorem{pro}[dummy]{Proposition}
\newtheorem{deft}[dummy]{Definition}
\newtheorem{exe}[dummy]{Example}
\newtheorem{rem}[dummy]{Remark}
\newtheorem*{example*}{Exemple}
\newtheorem*{examples*}{Exemples}
\newtheorem*{thm*}{Theorem}
\newtheorem*{definition*}{D\'{e}finition}
\newtheorem*{lem*}{Lemme}
\newtheorem*{prop*}{proposition}
\newcommand{\F}{{\bf F}}
\newcommand{\Z}{{\bf Z}}
\newcommand{\Q}{{\bf Q}}
\newcommand{\R}{{\bf R}}
\title[ A genus formula for the positive  \'{e}tale wild kernel]{ A genus formula for the  positive  \'{e}tale wild kernel}
\author[  Hassan Asensouyis, Jilali Assim \& Youness Mazigh]{Hassan Asensouyis$^{(*)}$, Jilali Assim$^{(\P)}$\;\&\; Youness Mazigh$^{(\P)}$}
\address{$^{(*)}$  Ibn Zohr University, Faculty of Applied Sciences, Ait Melloul,  80000 Agadir, Morocco.}
\address{$^{(\P)}$ Moulay Ismail University of Meknes, Faculty of Sciences, Department of Mathematics, B.P. 11201 Zitoune, 50000 Meknes, Morocco.}
\email{\textcolor[rgb]{0.00,0.00,1.00}{h.asensouyis@uiz.ac.ma }}
\email{\textcolor[rgb]{0.00,0.00,1.00}{j.assim@fs.umi.ac.ma}}
\email{\textcolor[rgb]{0.00,0.00,1.00}{y.mazigh@edu.umi.ac.ma}}
\keywords{ \'{e}tale wild kernels, Totally positive Galois cohomology}
\subjclass[2010]{11R34, 11R70}
\begin{document}
	\maketitle
	\renewcommand{\abstractname}{Abstract}
	\begin{abstract}
		Let $F$ be a number field and let $i\geq 2$ be an  integer. In this paper, we  study the positive \'{e}tale wild kernel $\mathrm{WK}^{\mbox{\'{e}t},+}_{2i-2}F$, which is the twisted analogue of the  $2$-primary part of the narrow class group. If $E/F$ is a Galois extension of number fields with Galois group
		$G$, we prove  a genus formula relating the order of the groups $ (\mathrm{WK}^{\mbox{\'{e}t},+}_{2i-2}E)_{G}$ and  $\mathrm{WK}^{\mbox{\'{e}t},+}_{2i-2}F$.
	\end{abstract}
	\section{Introduction}
	Let $F$ be a number field and let $p$ be a prime number. For  a finite set $S$ of primes of $F$ containing the $p$-adic primes and the infinite primes, let $G_{F,S}$ be the Galois group of the maximal algebraic extension
	$F_{S}$ of $F$ which is unramified outside $S$. It is well known that for all integer $i\geq 2$, the kernel of the localization map
	\begin{equation*}
	\xymatrix@=2pc{
		H^{2}(G_{F,S},\Z_{p}(i))\ar[r]& \bigoplus_{v\in
			S}H^{2}(F_{v},\Z_{p}(i))}
	\end{equation*}
	is independent of the choice of the set $S$ \cite[\S 6, Lemma 1]{Sc 79} \cite[page 336]{Ko 03}. This kernel is called the \'{e}tale wild kernel \cite{Ng 92,Ko 93}, and denoted by $WK_{2i-2}^{\mbox{\'{e}t}}F$. It is the analogs of the $p$-part of the classical wild kernel $WK_{2}F$ which occurs in Moore's exact sequence of power norm symbols (cf. \cite{Milnor}).\\
	Let $E/F$ be a Galois extension with Galois group $G$. For a fixed odd  prime $p$, several authors have studied  Galois co-descent and proved  genus formulas  \cite{Ko-Mo,Griffiths,Assim 12,Hassan}, for the \'{e}tale wild kernel, which is analogue to the Chevalley genus formula for the class groups. In this paper we settle the case $p=2$. For this purpose,  we use a slight variant of cohomology, the so-colled totally positive Galois cohomology \cite[\S 5]{Ka 93}. More precisely, for all integer $i$, we are interested in the kernel of the map
	\begin{equation*}
	\xymatrix@=2pc{ H^{2}_{+}(G_{F,S},\Z_{2}(i))\ar[r]&
		\displaystyle{\bigoplus_{v\in
				S_{f}}}H^{2}(F_{v},\Z_{2}(i)).}
	\end{equation*}
	Here $S_{f}$ denotes the set of finite primes in $S$ and
	$H^{j}_{+}(.,.)$ denotes  the $j$-th totally positive Galois
	cohomology groups (Section \ref{Positive  wild kernel}). When $i=1$, this kernel is isomorphic to the  $2$-primary part of the narrow $S$-class group of $F$. For  $i\geq2$, we
	show that this kernel is independent of the set $S$; it is referred to as the positive \'{e}tale wild kernel, and is denoted by  $WK_{2i-2}^{\mbox{\'{e}t},+}F$. It is  analogue to the narrow $S$-class group of $F$, and fits into an exact sequence (Proposition \ref{Proposition DF and WF})
	\begin{equation*}
	\xymatrix@=1.5pc{ 0\ar[r]&D_{F}^{+(i)}/F^{\bullet^{2}}\ar[r]&
		D_{F}^{(i)}/F^{\bullet^{2}}\ar[r]&\oplus_{v\mid
			\infty}H^{1}(F_{v},\Z_{2}(i))\ar[r]&
		WK^{\mbox{\'{e}t},+}_{2i-2}F\ar[r]&	WK^{\mbox{\'{e}t}}_{2i-2}F\ar[r]&0}
	\end{equation*}
	where  $D_{F}^{(i)}$ (see \cite[Defintion 2.3]{Ko 03})  is the \'{e}tale Tate kernel and $D_{F}^{+(i)}/F^{\bullet^{2}}$  is the kernel  of the signature map
	\begin{equation*}
	\xymatrix@=2pc{\mathrm{sgn}_{F}:\; H^{1}(F,\Z_{2}(i))/2\cong D_{F}^{(i)}/F^{\bullet^{2}} \ar[r]&(\Z/2)^{r_{1}}.}
	\end{equation*}
	Here $r_{1}$ denotes  the number of real places of $F$.
	In particular,
	\begin{itemize}
		\item for  $i$ even, we have  \begin{equation*}
		WK_{2i-2}^{\mbox{\'{e}t},+}F\cong WK_{2i-2}^{\mbox{\'{e}t}}F.
		\end{equation*}
		\item for $i$ odd, we have an exact sequence
		\begin{equation*}
		\xymatrix@=1.5pc{0\ar[r]& (\Z/2)^{\delta_{i}(F)}\ar[r]& 	WK^{\mbox{\'{e}t},+}_{2i-2}F\ar[r]&WK^{\mbox{\'{e}t}}_{2i-2}F\ar[r]&0,}
		\end{equation*}
		where $\delta_{i}(F)$ is the $2$-rank of the cokernel of the signature map $\mathrm{sgn}_{F}$.
	\end{itemize}
	\vskip 6pt
	For a place $v$ of $F$, let $G_{v}$ denote the decomposition group of $v$ in $E/F$. Define the plus normic subgroup  $H^{1,\mathcal{N}}_{+}(F,\Z_{2}(i))$ to be	the kernel  of the map
	\begin{equation*}
	\xymatrix@=1.5pc{  H^{1}_{+}(F,\Z_{2}(i))\ar[r]&
		\oplus_{v\in S_{f}}
		\frac{H^{1}(F_{v},\Z_{2}(i))}{N_{G_{v}}H^{1}(E_{w},\Z_{2}(i))}}
	\end{equation*}
	where $N_{G_{v}}=\sum_{\sigma\in G_{v}}\sigma$ is the norm map, and if $v$ is a prime of $F$, we denote by $w$ a prime of $E$ above $v$.\vskip 6pt   We prove the following genus formula for the positive  \'{e}tale wild kernel.
	\begin{thm*}\label{main theo}
		Let $E/F$ be a Galois extension of number fields  with
		Galois group $G$. Then for every $i\geq 2$, we have
		\begin{equation*}
		\frac{|(WK_{2i-2}^{\mbox{\'{e}t},+}E)_{G}|}{|WK_{2i-2}^{\mbox{\'{e}t},+}F|}=\frac{|X^{(i)}_{E/F}|.\prod_{v\in
				S_{f}}|H_{1}(G_{v},H^{2}(E_{w},\Z_{2}(i)))|}
		{|H_{1}(G,H^{0}(E,\Q_{2}/\Z_{2}(1-i))^{\vee})|.[H^{1}_{+}(F,\Z_{2}(i)):
			\mathcal{H}^{1,\mathcal{N}}_{+}(F,\Z_{2}(i))]}
		\end{equation*}
	\end{thm*}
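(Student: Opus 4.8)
The plan is to run a Galois descent on the localization sequence defining the positive wild kernel and to extract the genus formula from the resulting long exact sequences in $G$-homology, all groups being finite for $i\geq 2$. By Proposition \ref{Proposition DF and WF} and Section \ref{Positive  wild kernel}, after enlarging $S$ so that it is stable under $E/F$ I may regard $WK^{\mbox{\'{e}t},+}_{2i-2}F$ as the kernel of the localization map
\[
\lambda_{F}\colon H^{2}_{+}(G_{F,S},\Z_{2}(i))\longrightarrow \bigoplus_{v\in S_{f}}H^{2}(F_{v},\Z_{2}(i)),
\]
and likewise $WK^{\mbox{\'{e}t},+}_{2i-2}E=\ker\lambda_{E}$ as a $G$-module. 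First I would invoke Poitou--Tate global duality for the Kummer dual $\Q_{2}/\Z_{2}(1-i)$ of $\Z_{2}(i)$: the tail of the nine-term sequence identifies the cokernel of $\lambda_{E}$ with $H^{0}(E,\Q_{2}/\Z_{2}(1-i))^{\vee}$, $G$-equivariantly, which is the origin of the term $|H_{1}(G,H^{0}(E,\Q_{2}/\Z_{2}(1-i))^{\vee})|$ in the denominator. I would then split $\lambda_{E}$ into the two short exact sequences
\[
0\to WK^{\mbox{\'{e}t},+}_{2i-2}E\to H^{2}_{+}(G_{E,S},\Z_{2}(i))\to \mathrm{Im}\,\lambda_{E}\to 0,
\]
\[
0\to \mathrm{Im}\,\lambda_{E}\to \bigoplus_{v\in S_{f}}\bigoplus_{w\mid v}H^{2}(E_{w},\Z_{2}(i))\to H^{0}(E,\Q_{2}/\Z_{2}(1-i))^{\vee}\to 0,
\]
which sets up the homological machinery.

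Next I would take $G$-homology of both short exact sequences. For the local term I would use Shapiro's lemma: the primes $w$ above a fixed $v$ form a single $G$-orbit $G/G_{v}$, so $\bigoplus_{w\mid v}H^{2}(E_{w},\Z_{2}(i))\cong \mathrm{Ind}_{G_{v}}^{G}H^{2}(E_{w},\Z_{2}(i))$ and hence $H_{j}\bigl(G,\bigoplus_{w\mid v}H^{2}(E_{w},\Z_{2}(i))\bigr)\cong H_{j}(G_{v},H^{2}(E_{w},\Z_{2}(i)))$. The degree $j=1$ contribution is precisely the local product $\prod_{v\in S_{f}}|H_{1}(G_{v},H^{2}(E_{w},\Z_{2}(i)))|$ of the numerator, while the $j=0$ coinvariants assemble the target of the localization map at the bottom field. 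For the global comparison I would relate $H^{2}_{+}(G_{E,S},\Z_{2}(i))_{G}$ to $H^{2}_{+}(G_{F,S},\Z_{2}(i))$ through the corestriction edge map of the Hochschild--Serre filtration, the discrepancy being measured by the group $X^{(i)}_{E/F}$ together with the homology of $H^{1}_{+}(G_{E,S},\Z_{2}(i))$.

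I would then assemble everything by the snake lemma. Combining the long exact $G$-homology sequences of the two short exact sequences above with the descent comparison for $H^{2}_{+}$ produces a commutative diagram linking $(WK^{\mbox{\'{e}t},+}_{2i-2}E)_{G}$ to $WK^{\mbox{\'{e}t},+}_{2i-2}F$; taking orders and using that the alternating product of cardinalities along a finite exact sequence is $1$ yields the ratio as a product of the local homology terms, the global duality term, the descent group $X^{(i)}_{E/F}$, and one further factor. That last factor is the normic index $[H^{1}_{+}(F,\Z_{2}(i)):\mathcal{H}^{1,\mathcal{N}}_{+}(F,\Z_{2}(i))]$: it records the local norm defect, being the order of the image of $H^{1}_{+}(F,\Z_{2}(i))$ inside $\bigoplus_{v\in S_{f}}H^{1}(F_{v},\Z_{2}(i))/N_{G_{v}}H^{1}(E_{w},\Z_{2}(i))$, and it enters through the comparison of corestriction images in the local Tate cohomology that couples to the global localization sequence.

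The main obstacle I anticipate is the consistent treatment of the totally positive cohomology $H^{j}_{+}$ throughout the descent. Unlike ordinary Galois cohomology, $H^{j}_{+}$ carries the archimedean correction recorded in the exact sequence of Proposition \ref{Proposition DF and WF}, and the real places of $F$ may split or become complex in $E$; keeping track of the induced $G$-action on the archimedean contributions $\oplus_{v\mid\infty}H^{1}(F_{v},\Z_{2}(i))$ and ensuring that they introduce no spurious factors in the homology sequences is the delicate point. A closely related difficulty is verifying that the higher homology groups $H_{j}$ with $j\geq 2$ either vanish or cancel in pairs, so that only the displayed $H_{1}$-terms survive in the final count; this is exactly where the precise definition of $X^{(i)}_{E/F}$ and the duality identification of the cokernels must be used with care.
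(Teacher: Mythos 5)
Your skeleton (localization sequence, Poitou--Tate identification of $\mathrm{coker}\,\lambda_{E}$ with $H^{0}(E,\Q_{2}/\Z_{2}(1-i))^{\vee}$, $G$-homology plus Shapiro's lemma, snake lemma, order count) matches the paper's, but two essential mechanisms are missing or misplaced. First, the paper's argument hinges on the fact that corestriction induces an \emph{isomorphism} $H^{2}_{+}(G_{E,S},\Z_{2}(i))_{G}\xrightarrow{\ \sim\ }H^{2}_{+}(G_{F,S},\Z_{2}(i))$ (a consequence of the Tate spectral sequence for totally positive cohomology, via \cite[Lemma 6.4]{Weible}); this exact codescent is the whole reason for replacing $H^{2}$ by $H^{2}_{+}$, and it is what makes the middle column of the descent diagram collapse so that the snake lemma gives $\mathrm{coker}\,N_{i}\cong\ker N_{i}'$ and $\ker N_{i}\cong\mathrm{coker}\,\widetilde{\alpha}$ cleanly. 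You instead describe the descent of $H^{2}_{+}$ as having a ``discrepancy measured by $X^{(i)}_{E/F}$,'' which both leaves the archimedean bookkeeping unresolved (you flag it as your main obstacle but do not dispose of it) and mislocates $X^{(i)}_{E/F}$: in the paper that group is by definition the image of $\mathrm{coker}\,\kappa$ in $\mathrm{coker}\,\widetilde{\alpha}$, where $\kappa\colon H_{2}(G,\oplus_{w\in S_{f}}H^{2}(E_{w},\Z_{2}(i)))\to H_{2}(G,H^{0}(E,\Q_{2}/\Z_{2}(1-i))^{\vee})$, i.e.\ it arises from the degree-two homology of the local short exact sequence, not from the descent of the global $H^{2}_{+}$.

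Second, you do not supply the step that actually produces the norm index $[H^{1}_{+}(F,\Z_{2}(i)):\mathcal{H}^{1,\mathcal{N}}_{+}(F,\Z_{2}(i))]$. In the paper this comes from the cup-product dimension-shifting isomorphism $\widehat{H}^{q}(G,H^{2}_{+}(G_{E,S},\Z_{2}(i)))\cong\widehat{H}^{q+2}(G,H^{1}_{+}(E,\Z_{2}(i)))$ of \cite[Proposition 3.1]{CKPS}, which converts the map $\alpha$ on $H_{1}=\widehat{H}^{-2}$ of the $H^{2}_{+}$-groups into a map $\beta$ on $\widehat{H}^{0}$ of the $H^{1}_{+}$-groups; only then is $\mathrm{Im}\,\alpha$ identified with $H^{1}_{+}(F,\Z_{2}(i))/\mathcal{H}^{1,\mathcal{N}}_{+}(F,\Z_{2}(i))$, so that $|\mathrm{coker}\,\alpha|$ equals the local product $\prod_{v\in S_{f}}|H_{1}(G_{v},H^{2}(E_{w},\Z_{2}(i)))|$ divided by the norm index. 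Your phrase about ``comparison of corestriction images in the local Tate cohomology'' gestures at the answer but gives no route from $H_{1}(G,H^{2}_{+}(G_{E,S},\Z_{2}(i)))$ to $\widehat{H}^{0}(G,H^{1}_{+}(E,\Z_{2}(i)))$; without this degree shift the norm-index factor cannot be extracted.
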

The group $X^{(i)}_{E/F}$ (Definition $\ref{Xi}$) has order at most $|H_{2}(G,H^{0}(E,\Q_{2}/\Z_{2}(1-i))^{\vee})|$, and is trivial  if the
canonical morphism
\begin{equation*}
\xymatrix@=1.5pc{\kappa:\;	H_{2}(G,\oplus_{w\in S_{f}}H^{2}(E_{w},\Z_{2}(i)))\ar[r]& 	H_{2}(G,H^{0}(E,\Q_{2}/\Z_{2}(1-i))^{\vee})}
\end{equation*}
is surjective.\vskip 6pt
In particular, if $E/F$ is a relative quadratic extension of number fields,
the  order of the group $X^{(i)}_{E/F}$ is at most $2$. In this case we give, in the last section, a genus formula involving the positive Tate kernel $D^{+(i)}_{F}$. Roughly speaking, let $F_{\infty}$ (resp. $F_{v,\infty}$) denote the cyclotomic $\Z_{2}$-extension of $F$ (resp. $F_{v}$) and   let $R_{E/F}$ be the set of both  finite  primes tamely ramified in $E/F$ and $2$-adic primes such that $E_{w}\cap F_{v,\infty}\neq E_{w}$. Then for any odd  integer $i\geq 2$, we have
		\begin{enumerate}[label=(\roman*)]
			\item if $E\subseteq F_{\infty}$, the positive \'{e}tale wild kernel satisfies Galois codescent;
			\item if $E\nsubseteq F_{\infty}$,
			\begin{equation*}
			\frac{|(WK_{2i-2}^{\mbox{\'{e}t},+}E)^{G}|}{|WK_{2i-2}^{\mbox{\'{e}t},+}F|}=\frac{2^{r(E/F)-1+t}}{
				[D_{F}^{+(i)}:D_{F}^{+(i)}\cap N_{G}E^{\bullet}]}
			\end{equation*}
			where $r(E/F)=|R_{E/F}|$ and $t\in\{0,1\}$. Moreover, $t=0$ if $R_{E/F}\neq\emptyset$.   	
		\end{enumerate}
	\section{Positive  \'{e}tale wild kernel}
	\subsection{Totally positive Galois cohomology}\label{Positive  wild kernel} Let $F$ be a number field and  let $S$ be a finite set of primes of $F$
	containing the set $S_{2}$ of dyadic primes  and the set $S_{\infty}$ of archimedean primes. For a place $v$ of $F$, we denote by $F_{v}$ the completion of $F$ at $v$,
	and by $G_{F_{v}}$ the absolute Galois group of $F_{v}$.\vskip 6pt  For a discrete or a compact  $\Z_{2}[[\mathrm{G_{F,S}}]]$-module $M$, we write $M_{+}$
	for the cokernel of the map
	\begin{equation*}
	 \xymatrix@=2pc{ M\ar[r]&
		\displaystyle{\oplus_{v\mid\infty}\mathrm{Ind}^{G_{F}}_{G_{F_{v}}}}M,}
	\end{equation*}
	where
	$\mathrm{Ind}^{G_{F}}_{G_{F_{v}}}M$ denotes the induced module.
	Hence we have the  exact sequence
	\begin{equation*}
	\xymatrix@=2pc{0\ar[r]&M\ar[r]&\displaystyle{\oplus_{v\mid
				\infty}\mathrm{Ind}^{G_{F}}_{G_{F_{v}}}}M\ar[r]&M_{+}\ar[r]&0.}
	\end{equation*}
	Following \cite[\S 5]{Ka 93}, we define the $n$-$\mathrm{th}$ totally
	positive Galois cohomology group $H^{n}_{+}(G_{F,S},M)$ of $M$ by
	\begin{equation*}
	H^{n}_{+}(G_{F,S},M):=H^{n-1}(G_{F,S},M_{+}).
	\end{equation*}
	Recall  from  \cite[\S 5]{Ka 93}  some facts about the totally positive Galois cohomology.
	\begin{pro}\label{Proposition properties of positive cohomology}
		We have the following properties:
		\begin{enumerate}[label=(\roman*)]
			\item There is a long exact sequence
			\begin{equation*}
			\xymatrix@=1pc{\cdots\ar[r]&H^{n}_{+}(G_{F,S},M)\ar[r]&
				H^{n}(G_{F,S},M)\ar[r]&
				\displaystyle{\oplus_{v\mid\infty}}H^{n}(F_{v},M)\ar[r]&H^{n+1}_{+}(G_{F,S},M)\ar[r]&\cdots}
			\end{equation*}
			\item $H^{n}_{+}(G_{F,S},M)=0$ for all $n\neq1,2$.
			\item If $E/F$ is an extension unramified outside $S$ with Galois group
			$G$ then there is a cohomological spectral sequence
			\begin{equation*}
			H^{p}(G,H^{q}_{+}(G_{E,S},M))\Longrightarrow
			H^{p+q}_{+}(G_{F,S},M).
			\end{equation*}
		\end{enumerate}
	\hfill$\square$
	\end{pro}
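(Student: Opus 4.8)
The plan is to derive all three properties directly from the defining short exact sequence
\[
0\to M\to \bigoplus_{v\mid\infty}\mathrm{Ind}^{G_{F}}_{G_{F_{v}}}M\to M_{+}\to 0
\]
together with the identity $H^{n}_{+}(G_{F,S},M)=H^{n-1}(G_{F,S},M_{+})$. For (i), I would feed this short exact sequence into the long exact sequence of $G_{F,S}$-cohomology. Shapiro's lemma identifies $H^{n}\bigl(G_{F,S},\bigoplus_{v\mid\infty}\mathrm{Ind}^{G_{F}}_{G_{F_{v}}}M\bigr)$ with $\bigoplus_{v\mid\infty}H^{n}(F_{v},M)$, and the definition identifies $H^{n-1}(G_{F,S},M_{+})$ with $H^{n}_{+}(G_{F,S},M)$; substituting these into the long exact sequence and reindexing yields exactly the sequence of (i), whose connecting map $H^{n}(G_{F,S},M)\to\bigoplus_{v\mid\infty}H^{n}(F_{v},M)$ is the localization map at the archimedean places.

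For (ii), note first that $H^{n}_{+}(G_{F,S},M)=H^{n-1}(G_{F,S},M_{+})$ vanishes automatically for $n\le 0$, so the content is the vanishing $H^{m}(G_{F,S},M_{+})=0$ for $m\ge 2$ (the case $n\ge 3$). From the exact sequence of (i), $H^{m}(G_{F,S},M_{+})$ sits in an extension with subquotients $\mathrm{coker}(\mathrm{loc}_{m})$ and $\ker(\mathrm{loc}_{m+1})$, where $\mathrm{loc}_{k}\colon H^{k}(G_{F,S},M)\to\bigoplus_{v\mid\infty}H^{k}(F_{v},M)$ is localization. Hence the claim reduces to showing $\mathrm{loc}_{k}$ surjective for $k=m\ge 2$ and injective for $k=m+1\ge 3$. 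For $k\ge 3$ this is Tate's theorem that at $p=2$ the cohomology of $G_{F,S}$ is detected isomorphically at the real places; for $k=2$ the surjectivity follows from Poitou--Tate duality, since the image of $\mathrm{loc}_{2}$ into the full local sum $\bigoplus_{v\in S}H^{2}(F_{v},M)$ is the orthogonal complement of $H^{0}(G_{F,S},M^{*})$, and because $S$ contains the nonempty set of dyadic primes one can realize any prescribed archimedean components after correcting by a term supported at a single finite place, using that $H^{2}(F_{v},M)\cong H^{0}(F_{v},M^{*})^{\vee}$ surjects onto $H^{0}(G_{F,S},M^{*})^{\vee}$ for $v$ finite. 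I expect this degree-two surjectivity to be the main obstacle, the high-degree isomorphism being standard.

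For (iii), I would apply the Hochschild--Serre spectral sequence for the normal subgroup $G_{E,S}\trianglelefteq G_{F,S}$ with quotient $G$ to the coefficient module $M_{+}$, namely
\[
H^{p}(G,H^{q}(G_{E,S},M_{+}))\Longrightarrow H^{p+q}(G_{F,S},M_{+}).
\]
The decisive point is the compatibility of the plus-construction with restriction: by the Mackey isomorphism $\mathrm{Res}^{G_{F}}_{G_{E}}\mathrm{Ind}^{G_{F}}_{G_{F_{v}}}M\cong\bigoplus_{w\mid v}\mathrm{Ind}^{G_{E}}_{G_{E_{w}}}M$, summed over $v\mid\infty$, together with the exactness of restriction, one obtains that $\mathrm{Res}^{G_{F,S}}_{G_{E,S}}(M_{+})$ coincides with the plus-construction of $M$ formed over $E$. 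Therefore $H^{q}(G_{E,S},M_{+})=H^{q+1}_{+}(G_{E,S},M)$ and $H^{p+q}(G_{F,S},M_{+})=H^{p+q+1}_{+}(G_{F,S},M)$; substituting these and shifting the $q$-grading down by one converts the displayed spectral sequence into the asserted one $H^{p}(G,H^{q}_{+}(G_{E,S},M))\Rightarrow H^{p+q}_{+}(G_{F,S},M)$. All three arguments follow \cite[\S 5]{Ka 93} closely, the only genuinely global input being the cohomological-dimension statement used in (ii).
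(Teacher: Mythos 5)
Your proof is correct, but note that the paper itself offers no argument for this proposition: it is recalled from \cite[\S 5]{Ka 93} with the end-of-proof box placed directly after the statement, so there is no internal proof to compare against, and your derivation supplies the standard argument behind that citation. Concretely: (i) is indeed the long exact cohomology sequence of $0\to M\to\oplus_{v\mid\infty}\mathrm{Ind}^{G_{F}}_{G_{F_{v}}}M\to M_{+}\to 0$ combined with Shapiro's lemma (which applies because the real decomposition groups inject into $G_{F,S}$; this holds since $S\supseteq S_{2}$ forces $\mu_{2^{\infty}}\subseteq F_{S}$, on which complex conjugation acts nontrivially). For (iii), your Mackey-decomposition step is exactly the decisive verification that the restriction of $M_{+}$ to $G_{E,S}$ is the plus construction formed over $E$ (using $E_{S}=F_{S}$, so that $G_{E,S}$ is normal in $G_{F,S}$ with quotient $G$), and the degree shift is harmless because it preserves the bidegree $(r,1-r)$ of the differentials. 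For (ii), you correctly isolate the two genuinely global inputs: bijectivity of the localization map at the real places in degrees $\geq 3$ (Tate's theorem; see Neukirch--Schmidt--Wingberg, \emph{Cohomology of Number Fields}, Thm.\ 8.6.10, valid since $M$ is $2$-primary and $S\supseteq S_{2}\cup S_{\infty}$), and surjectivity in degree $2$ onto the archimedean components, where your correction-at-one-finite-place argument is sound: $S_{f}\supseteq S_{2}\neq\emptyset$, and the injection $H^{0}(G_{F,S},M^{\ast})\hookrightarrow H^{0}(F_{v_{0}},M^{\ast})$ dualizes via local duality to a surjection $H^{2}(F_{v_{0}},M)\twoheadrightarrow H^{0}(G_{F,S},M^{\ast})^{\vee}$, so any prescribed archimedean tuple can be completed to an element killed by the last map of the Poitou--Tate sequence. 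The one point you leave implicit is that both of these inputs are theorems about \emph{finite} modules, whereas the proposition is asserted for discrete or compact $\Z_{2}[[G_{F,S}]]$-modules; a line passing to direct limits (for discrete torsion $M$) and to inverse limits (e.g.\ $\Z_{2}(i)=\varprojlim\,\Z/2^{n}(i)$, where finiteness of the relevant cohomology groups makes $\varprojlim$ exact) should be added, but this is routine and is also how \cite[\S 5]{Ka 93} proceeds.
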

We also have a Tate spectral sequence
	\begin{equation*}
H_{p}(G,H^{q}_{+}(G_{E,S},M))\Longrightarrow
H^{p+q}_{+}(G_{F,S},M).
\end{equation*}

  Hence,  for a finite $2$-primary Galois module $M$, we have an isomorphism
	
	\begin{equation*}
	\xymatrix@=2pc{ H^{2}_{+}(G_{E,S},M)_{G}\ar[r]^-{\sim}&
		H^{2}_{+}(G_{F,S},M)}
	\end{equation*}
	(\cite[Lemma 6.4]{Weible}). In particular, by passing to the inverse limit, 	the corestriction  map induces an isomorphism
	\begin{equation}\label{cor iso}
		\xymatrix@=2pc{ H^{2}_{+}(G_{E,S},\Z_{2}(i))_{G}\ar[r]^-{\sim}&
		H^{2}_{+}(G_{F,S},\Z_{2}(i)).}
	\end{equation}
	Recall the local duality Theorem (e.g.\,\cite[Corollary
	I.2.3]{Milne}): For $n=0,1,2$ and for every place $v$ of $F$, the cup product
	\begin{equation}\label{local duality}
	\begin{array}{cccccc}
	H^{n}(F_{v},M) & \times& H^{2-n}(F_{v},M^{\ast}) & \xymatrix@=1.5pc{
		\ar[r]&} &
	H^{2}(F_{v},\mu_{2^{\infty}})\simeq \Q_{2}/\Z_{2},&
	\mbox{if $v$ is finite}\\
	&   &    &   &    \\
	\widehat{H}^{n}(F_{v},M) & \times& \widehat{H}^{2-n}(F_{v},M^{\ast}) & \xymatrix@=1.5pc{ \ar[r]&} &
	H^{2}(F_{v},\mu_{2^{\infty}}),&
	\mbox{if $v$ is infinite}
	\end{array}
	\end{equation}
	is a perfect pairing, where $\widehat{H}^{n}(F_{v},.)$
	is the Tate cohomology group, $\mu_{2^{\infty}}$ is the group of all roots of unity of $2$-power order, and  $(.)^{\ast}$ means the Kummer
	dual: $M^{\ast}=\mathrm{Hom}(M,\mu_{2^{\infty}})$.\vskip 6pt
	We have an analogue of the Poitou-Tate long exact sequence
	\begin{pro}\label{exact sequence of finie places} Let $S_{f}$ denote the set of finite places in $S$. Then there is a
		long exact sequence
		\begin{equation*}
		\xymatrix@=1.5pc{\oplus_{v\in
				S_{f}}H^{0}(F_{v},M)\ar@{^{(}->}[r]&H^{2}(G_{F,S},M^{\ast})^{\vee}\ar[r]&H^{1}_{+}(G_{F,S},M)\ar[r]& \oplus_{v\in S_{f}}H^{1}(F_{v},M)\ar[d]\\
			H^{0}(G_{F,S},M^{\ast})^{\vee}& \oplus_{v\in
				S_{f}}H^{2}(F_{v},M)\ar@{->>}[l]&H^{2}_{+}(G_{F,S},M)\ar[l]&
			H^{1}(G_{F,S},M^{\ast})^{\vee}\ar[l]
		}
		\end{equation*}
		where the subscript $(.)^{\vee}$ refers to the
		Pontryagin dual: $M^{\vee}=\mathrm{Hom}(M,\Q_{2}/\Z_{2})$.
	\end{pro}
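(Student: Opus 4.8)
The plan is to obtain this sequence by splicing the classical nine-term Poitou--Tate sequence for the finite $2$-primary $G_{F,S}$-module $M$ with the long exact sequence of Proposition \ref{Proposition properties of positive cohomology}(i), thereby trading the global cohomology of $M$ for its totally positive cohomology and absorbing all archimedean local terms. The bridge between the two is the defining sequence $0\to M\to\bigoplus_{v\mid\infty}\mathrm{Ind}^{G_{F}}_{G_{F_{v}}}M\to M_{+}\to 0$ together with Shapiro's lemma, which identifies $H^{n}(G_{F,S},\mathrm{Ind}^{G_{F}}_{G_{F_{v}}}M)\cong H^{n}(F_{v},M)$ and exhibits the maps $H^{n}(G_{F,S},M)\to\bigoplus_{v\mid\infty}H^{n}(F_{v},M)$ of Proposition \ref{Proposition properties of positive cohomology}(i) as precisely the archimedean components of the Poitou--Tate localization maps.

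First I would write out the nine-term Poitou--Tate sequence for $M$ (whose dual terms are already the ordinary groups $H^{2-n}(G_{F,S},M^{\ast})^{\vee}$ that we want to keep), and split every localization term $\bigoplus_{v\in S}$ as $\bigoplus_{v\in S_{f}}\oplus\bigoplus_{v\mid\infty}$. I would then invoke Proposition \ref{Proposition properties of positive cohomology}(ii) to kill $H^{n}_{+}(G_{F,S},M)$ for $n\neq 1,2$, and the local vanishing $H^{n}(F_{v},M)=0$ for $n\geq 3$ at finite $v$, so that only the eight terms in the statement can survive. The key simplification is that at a real place Tate and ordinary cohomology coincide in degrees $\geq 1$; hence in degrees $1$ and $2$ the archimedean local groups occurring in Poitou--Tate are exactly those occurring in Proposition \ref{Proposition properties of positive cohomology}(i), and so can be eliminated between the two sequences.

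The heart of the argument is then a diagram chase on the resulting commutative ladder. Feeding the relations $\cdots\to H^{n}_{+}\to H^{n}(G_{F,S},M)\to\bigoplus_{v\mid\infty}H^{n}(F_{v},M)\to H^{n+1}_{+}\to\cdots$ into the Poitou--Tate sequence replaces each global term $H^{n}(G_{F,S},M)$ by $H^{n}_{+}$ while the archimedean local terms cancel, leaving only $\bigoplus_{v\in S_{f}}H^{n}(F_{v},M)$ on the local side and the untouched dual terms $H^{2-n}(G_{F,S},M^{\ast})^{\vee}$ on the other. The vanishing $H^{0}_{+}(G_{F,S},M)=0$ then forces the injectivity $\bigoplus_{v\in S_{f}}H^{0}(F_{v},M)\hookrightarrow H^{2}(G_{F,S},M^{\ast})^{\vee}$ at the left end, and $H^{3}_{+}(G_{F,S},M)=0$ forces the surjectivity $\bigoplus_{v\in S_{f}}H^{2}(F_{v},M)\twoheadrightarrow H^{0}(G_{F,S},M^{\ast})^{\vee}$ at the right end.

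I expect the genuine difficulty to be the archimedean bookkeeping in degree $0$. Poitou--Tate duality forces the Tate groups $\widehat{H}^{0}(F_{v},M)$ at the real places, this being the shape of the local pairing \ref{local duality}, whereas Proposition \ref{Proposition properties of positive cohomology}(i), being the cohomology sequence of $M_{+}$, only sees the ordinary $H^{0}(F_{v},M)=M^{G_{F_{v}}}$. Reconciling the two through the norm maps $N_{G_{F_{v}}}$ at the real places, and checking that the image of the global $H^{0}(G_{F,S},M)$ makes exactly the relevant archimedean contributions cancel, is the step that must be carried out with care; it is also what pins down that the dual side remains the \emph{ordinary} (rather than positive) cohomology of $M^{\ast}$. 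Once the connecting homomorphisms are verified to be compatible, exactness of the spliced sequence follows from exactness of its two constituents by a routine chase; alternatively one may shortcut this reconciliation by invoking Kahn's duality for totally positive cohomology directly.
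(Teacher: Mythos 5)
The paper does not actually prove this proposition: it simply cites \cite[Proposition 2.6]{Mazigh}, so there is no internal argument to match yours against. Your splicing strategy (classical Poitou--Tate for $M$ combined, via Shapiro's lemma, with the long exact sequence of Proposition \ref{Proposition properties of positive cohomology}(i)) is the natural route and is presumably close to what the cited reference does; your closing remark about invoking the duality directly is, in effect, what the paper itself does.

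However, as a proof your proposal has a genuine gap, and it sits exactly where you yourself locate the ``genuine difficulty'' --- you name it but do not resolve it. Writing $R\Gamma_{+}$ as the fibre of $R\Gamma(G_{F,S},M)\to\oplus_{v\mid\infty}R\Gamma(F_{v},M)$, the fibre of $H^{*}_{+}(G_{F,S},M)\to\oplus_{v\in S_{f}}H^{*}(F_{v},M)$ is computed by the fibre of $R\Gamma(G_{F,S},M)\to\oplus_{v\in S}R\Gamma(F_{v},M)$ with \emph{ordinary} cohomology at the real places, whereas the compactly supported complex appearing in $2$-adic Artin--Verdier/Poitou--Tate duality (whose cohomology gives the terms $H^{2-n}(G_{F,S},M^{\ast})^{\vee}$) uses \emph{Tate} cohomology there. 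The discrepancy at each real place is a complex concentrated in degrees $\leq 0$ whose $H^{0}$ is an extension of $N_{G_{F_{v}}}M$ by $\widehat{H}^{-1}(F_{v},M)$, and this is nonzero in general (e.g.\ $M=\Z/2$ with trivial action). Consequently the two candidate ``third vertices'' agree automatically only in degrees $\geq 2$; in degree $1$ --- i.e.\ precisely at the terms $\oplus_{v\in S_{f}}H^{0}(F_{v},M)\hookrightarrow H^{2}(G_{F,S},M^{\ast})^{\vee}\to H^{1}_{+}(G_{F,S},M)$ --- one must still show that this degree-$\leq 0$ archimedean contribution dies, which also has to contend with $H^{3}(G_{F,S},M^{\ast})\cong\oplus_{v\ \mathrm{real}}H^{3}(F_{v},M^{\ast})$ being nonzero when $F$ has real places. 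This is not bookkeeping that ``cancels'' formally. Relatedly, two long exact sequences sharing the terms $H^{n}(G_{F,S},M)$ do not splice into a third by a routine diagram chase: you need either the octahedral axiom at the level of complexes or an explicit braid whose fourth strand is exactly the archimedean comparison sequence you deferred. So either carry out that degree-$0$ analysis at the real places, or do as the paper does and quote \cite[Proposition 2.6]{Mazigh} (or Nekov\'a\v{r}'s formalism \cite{Nekovar06}) outright.
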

	\begin{proof}
		See \cite[Proposition 2.6]{Mazigh}.
	\end{proof}
	For a  $\Z_{2}[[G_{F,S}]]$-module $M$ and
	$n=1,2$ we define the groups  $\mathrm{III}_{S}^{n}(M)$   and
	$\mathrm{III}_{S}^{n,+}(M)$ to be the kernels of the localization maps
	\begin{equation*}
	\mathrm{III}_{S}^{n}(M):=\ker(\xymatrix@=1.5pc{H^{n}(G_{F,S},M)\ar[r]& \bigoplus_{v\in
			S_{f}}H^{n}(F_{v},M)})
	\end{equation*}
	and
	\begin{equation*}
	\mathrm{III}_{S}^{n,+}(M):=\ker(\xymatrix@=1.5pc{
		H^{n}_{+}(G_{F,S},M)\ar[r]& \bigoplus_{v\in S_{f}}H^{n}(F_{v},M)}).
	\end{equation*}
	We state a Poitou-Tate duality in the case $p=2$ as a consequence of Proposition
	\ref{exact sequence of finie places} and local duality (\ref{local duality}).
	\begin{coro}\label{ Global duality}
		Let $n=1,2$. Then there is a perfect pairing
		\begin{equation*}
		\begin{array}{ccccc}
		\mathrm{III}_{S}^{n,+}(M) & \times & \mathrm{III}_{S}^{3-n}(M^{\ast}) & \xymatrix@=1.5pc{ \ar[r]&} &
		\Q_{2}/\Z_{2}.
		\end{array}
		\end{equation*}
	\end{coro}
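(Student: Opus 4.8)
The plan is to deduce both perfect pairings from the long exact sequence of Proposition~\ref{exact sequence of finie places} by reading off $\mathrm{III}_S^{n,+}(M)$ as a subquotient of the global cohomology of the Kummer dual $M^\ast$ and then dualising. Concretely, I would aim to produce canonical isomorphisms $\mathrm{III}_S^{1,+}(M)^\vee\cong\mathrm{III}_S^{2}(M^\ast)$ and $\mathrm{III}_S^{2,+}(M)^\vee\cong\mathrm{III}_S^{1}(M^\ast)$, each of which is exactly the statement that the corresponding cup-product pairing into $\Q_2/\Z_2$ is perfect.

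For $n=1$, I would isolate from the sequence the four-term exact piece
\begin{equation*}
\oplus_{v\in S_f}H^{0}(F_{v},M)\hookrightarrow H^{2}(G_{F,S},M^{\ast})^{\vee}\xrightarrow{\beta} H^{1}_{+}(G_{F,S},M)\to \oplus_{v\in S_f}H^{1}(F_{v},M).
\end{equation*}
By exactness $\mathrm{III}_S^{1,+}(M)=\ker\bigl(H^{1}_{+}(G_{F,S},M)\to \oplus_v H^1(F_v,M)\bigr)$ is the image of $\beta$, hence is isomorphic to the cokernel of the injection $\oplus_v H^0(F_v,M)\hookrightarrow H^{2}(G_{F,S},M^{\ast})^{\vee}$. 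Applying the exact Pontryagin-dual functor then gives
\begin{equation*}
\mathrm{III}_S^{1,+}(M)^{\vee}\cong\ker\Bigl(H^{2}(G_{F,S},M^{\ast})\to \oplus_{v\in S_f}H^{0}(F_{v},M)^{\vee}\Bigr),
\end{equation*}
and the local duality pairing (\ref{local duality}) identifies $H^{0}(F_{v},M)^{\vee}\cong H^{2}(F_{v},M^{\ast})$ for each finite $v$. The case $n=2$ is entirely parallel: from the piece $\oplus_v H^1(F_v,M)\xrightarrow{\phi} H^{1}(G_{F,S},M^{\ast})^{\vee}\to H^{2}_{+}(G_{F,S},M)\to \oplus_v H^2(F_v,M)$ one reads off $\mathrm{III}_S^{2,+}(M)\cong\mathrm{coker}\,\phi$, and dualising together with the local duality isomorphism $\bigl(\oplus_v H^1(F_v,M)\bigr)^\vee\cong \oplus_v H^1(F_v,M^\ast)$ yields $\mathrm{III}_S^{2,+}(M)^\vee\cong\ker\,\phi^\vee$.

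The main obstacle is precisely the compatibility left open in both steps: that under local duality the internal maps $\alpha\colon\oplus_v H^0(F_v,M)\to H^2(G_{F,S},M^\ast)^\vee$ and $\phi$ of the Poitou--Tate sequence are the Pontryagin duals of the global localization maps $H^{3-n}(G_{F,S},M^\ast)\to\oplus_v H^{3-n}(F_v,M^\ast)$. Granting this, the map in the dualised $n=1$ identity is exactly the localization map, so its kernel is $\mathrm{III}_S^2(M^\ast)$; likewise $\phi^\vee$ is the localization map, so $\ker\phi^\vee=\mathrm{III}_S^1(M^\ast)$. I would verify this compatibility by unwinding the construction of the sequence in \cite{Mazigh} back to the cup products defining local duality, exactly as in the classical Poitou--Tate formalism; the ``plus'' decoration intervenes only through the archimedean terms, which have already been discarded on passing to the finite-place localizations, so no extra archimedean contribution appears. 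Finally, Pontryagin reflexivity causes no trouble, since for finite $M$ all the groups in sight are finite and the general statement follows by passing to the limit.
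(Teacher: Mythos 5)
Your proposal is correct and follows essentially the same route as the paper: extract the two exact pieces of the Poitou--Tate-type sequence from Proposition \ref{exact sequence of finie places}, identify $\mathrm{III}_S^{n,+}(M)$ as the cokernel of the map into $H^{3-n}(G_{F,S},M^{\ast})^{\vee}$, then dualize and apply local duality (\ref{local duality}). The only difference is that you explicitly flag the compatibility of the dualized maps with the global localization maps, which the paper leaves implicit.
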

	\begin{proof} By Proposition \ref{exact sequence of finie
			places} we have the  exact sequences
		\begin{equation*}
		\xymatrix@=2pc{0\ar[r]&\oplus_{v\in
				S_{f}}H^{0}(F_{v},M)\ar[r]&H^{2}(G_{F,S},M^{\ast})^{\vee}\ar[r]&
			\mathrm{III}_{S}^{1,+}(M)\ar[r]&0}
		\end{equation*}
		and
		\begin{equation*}
		\xymatrix@=2pc{\oplus_{v\in S_{f}}H^{1}(F_{v},M)\ar[r]& H^{1}(G_{F,S},M^{\ast})^{\vee}\ar[r]&
			\mathrm{III}_{S}^{2,+}(M)\ar[r]&0.}
		\end{equation*}
		Dualizing  these exact sequences and using the local duality $(\ref{local duality})$, we get
		\begin{equation*}
		\mathrm{III}_{S}^{1,+}(M)^{\vee}\cong  \mathrm{III}_{S}^{2}(M^{\ast})\quad \mbox{and}\quad  \mathrm{III}_{S}^{2,+}(M)^{\vee}\cong  \mathrm{III}_{S}^{1}(M^{\ast}).
		\end{equation*}
	\end{proof}
	
	\subsection{Signature} In this subsection we recall  some properties of the signature map (see e.g. \cite{Ko 03}, \cite[\S 1.2]{Assim mova}). For any real place $v$ of the number field $F$, let $i_{v}: F\longrightarrow \R$ denote the
	corresponding real embedding. The natural signature maps
	$\mathrm{sgn}_{v}: F^{\bullet}\longrightarrow \Z/2\Z$
	(where $\mathrm{sgn}_{v}(x)=0$ or $1$ according to $i_{v}(x)>0$ or
	not) give rise to the following surjective map
	\begin{equation*}
	\begin{array}{cccc}
	&F^{\bullet}/F^{\bullet^{2}} & \xymatrix@=1.5pc{\ar[r]&} & \oplus_{v\; real}\Z/2\Z \\
	&x & \xymatrix@=1.5pc{ \ar@{|->}[r]&} & (\mathrm{sgn}_{v}(x))_{v\; real}
	\end{array}
	\end{equation*}
	The exact sequence of $G_{F}$-modules
	\begin{equation*}
	\xymatrix@=2pc{ 0\ar[r]&\Z_{2}(i)\ar[r]^-{2}& \Z_{2}(i)\ar[r]& \Z/2(i)\ar[r]&0}
	\end{equation*}
	gives rise to an exact sequence
	\begin{equation*}
	\xymatrix@=2pc{0\ar[r]& H^{1}(F,\Z_{2}(i))/2\ar[r]& H^{1}(F, \Z/2(i))\ar[r]& H^{2}(F,\Z_{2}(i)),}
	\end{equation*}
	where for an abelian group $A$, $A/2$ denotes the cokernel of the multiplication by $2$ on $A$.\vskip 6pt
	Since we have
	\begin{eqnarray*}
		H^{1}(F,\Z/2(i))&\cong& 	H^{1}(F,\Z/2(1))(i-1)\\
		&\cong& F^{\bullet}/F^{\bullet^{2}}(i-1)
	\end{eqnarray*}
	there exists a subgroup $D_{F}^{(i)}$ (the \'{e}tale Tate kernel) of $F^{\bullet}$ containing $F^{\bullet^{2}}$
	such that
	\begin{equation}\label{DF}
	D_{F}^{(i)}/F^{\bullet^{2}}\cong H^{1}(F,\Z_{2}(i))/2.
	\end{equation}
	We will consider the restriction of the above  signature map to the quotient $D_{F}^{(i)}/F^{\bullet^{2}}$:
	\begin{equation*}
	\xymatrix@=2pc{\mathrm{sgn}_{F}:\; D_{F}^{(i)}/F^{\bullet^{2}}\ar[r]& (\Z/2)^{r_{1}},}
	\end{equation*}
	where $r_{1}$ is the number of real places of $F$.\\
	
	Let $D_{F}^{+(i)}/F^{\bullet^{2}}$ be the kernel and $(\Z/2)^{\delta_{i}(F)}$ be the cokernel of $\mathrm{sgn}_{F}$, respectively. So we have an exact sequence
	\begin{equation*}
	\xymatrix@=1.5pc{ 0\ar[r]&D_{F}^{+(i)}/F^{\bullet\,2}\ar[r]&
		D_{F}^{(i)}/F^{\bullet\,2}\ar[r]^-{\mathrm{sgn}_{F}}&
		(\Z/2)^{r_{1}}\ar[r]& (\Z/2)^{\delta_{i}(F)}\ar[r]&0.}
	\end{equation*}
	If $i$ is an even integer, the
	signature map
	\begin{equation}\label{signature map tri}
	\xymatrix@=2pc{\mathrm{sgn}_{F}:\; D_{F}^{(i)}/F^{\bullet^{2}}\ar[r]& (\Z/2)^{r_{1}}}
	\end{equation}
	is trivial \cite[Proposition 1.2]{Assim mova}, and then $D_{F}^{+(i)}=D_{F}^{(i)}$.
	\subsection{ Positive \'{e}tale wild kernel} Following \cite{Ng 92,Ko 93}, the \'{e}tale wild kernel $WK_{2i-2}^{\mbox{\'{e}t}}F$ is the group
		\begin{equation*}
	WK_{2i-2}^{\mbox{\'{e}t}}F:=\ker( \xymatrix@=1.5pc{
		H^{2}(G_{F,S},\Z_{p}(i))\ar[r]& \bigoplus_{v\in
			S}H^{2}(F_{v},\Z_{p}(i))}).
	\end{equation*}
		For $i\geq2$, it is well known that the \'{e}tale wild kernel $WK_{2i-2}^{\mbox{\'{e}t}}F$ is independent of the set $S$ containing the $p$-adic primes and the infinite primes (\cite[\S 6, Lemma 1]{Sc 79}, \cite[page 336]{ Ko 03}).\vskip 6pt
	There have been much work on  the Galois co-descent for the \'{e}tale
	wild kernel at odd primes \cite{Ko-Mo,Griffiths,Assim 12,Hassan}. The case $p=2$ has been studied essentially in the classical case $i=2$ \cite{Ko-Mo,Ko-Mo 03,Griffiths}. The situation for $p=2$ is more complicated, since the cohomology groups $H^{k}(\R,\Z_{2}(i))$ and $H^{k}(\R,\Q_{2}/\Z_{2}(i))$  do not necessarily vanish, and the group  $H^{2}(G_{E,S},\Z_{2}(i))$ does not satisfy  Galois co-descent. This motivates the following definition of the positive \'{e}tale wild kernel.\vskip 6pt
	Let $S_{f}$ denote  the set of finite primes in $S$ and let $\mathcal{O}_{F,S}$ be the ring of $S$-integers of $F$. For all $i\in\Z$, recall the last three terms of the Poitou-Tate exact sequence (Proposition \ref{exact sequence of finie places}):
	\begin{equation*}
	\xymatrix@=2pc{	H^{2}_{+}(G_{F,S},\Z_{2}(i))\ar[r]&\oplus_{v\in S_{f}}H^{2}(F_{v},\Z_{2}(i))\ar[r]&H^{0}(G_{F,S},\Q_2/\Z_2(1-i))^{\vee}\ar[r]&0}
	\end{equation*}
	\begin{deft} Let $i\in\Z$. We define the  positive \'{e}tale wild kernel
		$WK_{2i-2}^{\mbox{\'{e}t},+}\mathcal{O}_{F,S}$ to be the kernel of the localization map
		\begin{equation*}
		\xymatrix@=2pc{ H^{2}_{+}(G_{F,S},\Z_{2}(i))\ar[r]&
			\oplus_{v\in S_{f}}H^{2}(F_{v},\Z_{2}(i)).}
		\end{equation*}
	\end{deft}
	\begin{rem}
		For $i=1$, the group $WK_{0}^{\mbox{\'{e}t},+}\mathcal{O}_{F,S}$ is isomorphic to the $2$-part of the narrow $S$-class group  $A_{F,S}^{+}$ of
		$F$. In particular it depends on the set $S$. Indeed, on the one hand
		Corollary $\ref{ Global duality}$
		shows that
		\begin{equation*}
		WK_{0}^{\mbox{\'{e}t},+}\mathcal{O}_{F,S}\cong \mathrm{III}_{S}^{1}(\Q_{2}/\Z_{2})^{\vee}.
		\end{equation*}
		On the other hand
		\begin{eqnarray*}
			\mathrm{III}_{S}^{1}(\Q_{2}/\Z_{2})&=&\ker(\xymatrix@=2pc{H^{1}
				(G_{F,S},\Q_{2}/\Z_{2})\ar[r]&\oplus_{v\in S_{f}}H^{1}(F_{v},\Q_{2}/\Z_{2}}))\\
			&=& \ker(\xymatrix@=2pc{\mathrm{Hom}(G_{F,S},\Q_{2}/\Z_{2})\ar[r]&
				\oplus_{v\in S_{f}} \mathrm{Hom}(G_{F_{v}},\Q_{2}/\Z_{2})})\\
			&\cong& (A_{F,S}^{+})^{\vee}.
		\end{eqnarray*}
		It follows that $WK_{0}^{\mbox{\'{e}t},+}\mathcal{O}_{F,S}\cong A_{F,S}^{+}$.
	\end{rem}
	Hence the  positive \'{e}tale wild kernel plays a similar role as the $2$-primary part of the narrow $S$-class group. We restrict our study to the case $i\geq 2$, and we will show that $WK_{2i-2}^{\mbox{\'{e}t},+}\mathcal{O}_{F,S}$ is independent of the set $S$ containing the $2$-adic primes and the infinite primes. However, all the results remain true for  $i\neq 1$ if we assume the finiteness of the Galois cohomology group $H^{2}(G_{F,S},\Z_{2}(i))$. Note that for $i=0$, this finiteness is equivalent to the Leopoldt conjecture, and for $i\geq 2$ this is true
	as a consequence of the finiteness of the $K$-theory groups  $K_{2i-2}\mathcal{O}_{F,S}$ and the connection between $K$-theory and \'{e}tale cohomology via Chern characters \cite{So 79, Dwyer}.\vskip 6pt
	The following proposition gives the link between the kernels  $	WK^{\mbox{\'{e}t},+}_{2i-2}\mathcal{O}_{F,S}$ and $WK^{\mbox{\'{e}t}}_{2i-2}F$.
	\begin{pro}\label{Proposition DF and WF}
		For all integer $i\geq 2$, there exists an exact sequence
		\begin{equation*}
		\xymatrix@=1.5pc{ 0\ar[r]&D_{F}^{+(i)}/F^{\bullet^{2}}\ar[r]&
			D_{F}^{(i)}/F^{\bullet^{2}}\ar[r]&\oplus_{v\mid
				\infty}H^{1}(F_{v},\Z_{2}(i))\ar[r]&
			WK^{\mbox{\'{e}t},+}_{2i-2}\mathcal{O}_{F,S}\ar[r]&	WK^{\mbox{\'{e}t}}_{2i-2}F\ar[r]&0.}
		\end{equation*}
		In particular,
		\begin{itemize}
			\item if $i$ is even, there is an isomorphism
			\begin{equation*}
			WK^{\mbox{\'{e}t},+}_{2i-2}\mathcal{O}_{F,S}\cong
			WK^{\mbox{\'{e}t}}_{2i-2}F,
			\end{equation*}	
			\item  if $i$ is odd, we have an exact sequence:
			\begin{equation*}
			\xymatrix@=1.5pc{ 0\ar[r]&
				(\Z/2)^{\delta_{i}(F)}\ar[r]&WK^{\mbox{\'{e}t},+}_{2i-2}\mathcal{O}_{F,S}\ar[r]&	WK^{\mbox{\'{e}t}}_{2i-2}F\ar[r]&0}
			\end{equation*}
			where  $\delta_{i}(F)$ is the $2$-rank of the cokernel of the signature map $\mathrm{sgn}_{F}$.
		\end{itemize}
	\end{pro}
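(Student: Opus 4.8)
The plan is to read the desired sequence off from the fundamental long exact sequence of Proposition~\ref{Proposition properties of positive cohomology}(i) applied to $M=\Z_{2}(i)$, by restricting the comparison map $\gamma\colon H^{2}_{+}\to H^{2}$ to the two wild kernels. The relevant portion is
\[
\xymatrix@C=0.65pc{
H^{1}(G_{F,S},\Z_{2}(i)) \ar[r]^-{r_{1}} & \displaystyle\bigoplus_{v\mid\infty}H^{1}(F_{v},\Z_{2}(i)) \ar[r]^-{\partial} & H^{2}_{+}(G_{F,S},\Z_{2}(i)) \ar[r]^-{\gamma} & H^{2}(G_{F,S},\Z_{2}(i)) \ar[r]^-{r_{2}} & \displaystyle\bigoplus_{v\mid\infty}H^{2}(F_{v},\Z_{2}(i)).
}
\]
From this I want to produce a short exact sequence $0\to\operatorname{im}\partial\to WK^{\mbox{\'{e}t},+}_{2i-2}\mathcal{O}_{F,S}\to WK^{\mbox{\'{e}t}}_{2i-2}F\to0$ and then identify $\operatorname{im}\partial$ with the cokernel of $\mathrm{sgn}_{F}$.

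First I would establish that $\gamma$ is compatible with localization at the finite places, i.e. that $\lambda_{+}=\mathrm{res}_{S_{f}}\circ\gamma$, where $\lambda_{+}$ and $\mathrm{res}_{S_{f}}$ denote the localization maps into $\bigoplus_{v\in S_{f}}H^{2}(F_{v},\Z_{2}(i))$ defining $WK^{\mbox{\'{e}t},+}_{2i-2}\mathcal{O}_{F,S}$ and the finite part of $WK^{\mbox{\'{e}t}}_{2i-2}F$ respectively. Both localizations are induced by restriction along $G_{F_{v}}\hookrightarrow G_{F,S}$, and $\gamma$ is the natural comparison map attached to the short exact sequence $0\to M\to\bigoplus_{v\mid\infty}\mathrm{Ind}^{G_{F}}_{G_{F_{v}}}M\to M_{+}\to0$; since $M_{+}$ is supported on the archimedean places, it does not affect localization at a finite place, whence the square commutes. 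Granting this, $\operatorname{im}\gamma=\ker r_{2}$ by exactness, and since $WK^{\mbox{\'{e}t}}_{2i-2}F\subseteq\ker r_{2}$ (it is killed by the localization at all $v\in S$, in particular at the archimedean ones), every class of $WK^{\mbox{\'{e}t}}_{2i-2}F$ lifts along $\gamma$; the commuting square then forces such a lift to lie in $\ker\lambda_{+}=WK^{\mbox{\'{e}t},+}_{2i-2}\mathcal{O}_{F,S}$, so $\gamma$ restricts to a surjection onto $WK^{\mbox{\'{e}t}}_{2i-2}F$. Its kernel is $\ker\gamma=\operatorname{im}\partial$, which lies in $WK^{\mbox{\'{e}t},+}_{2i-2}\mathcal{O}_{F,S}$ because $\lambda_{+}(\operatorname{im}\partial)=\mathrm{res}_{S_{f}}(\gamma(\ker\gamma))=0$. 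This yields the short exact sequence
\[
0\longrightarrow\operatorname{im}\partial\longrightarrow WK^{\mbox{\'{e}t},+}_{2i-2}\mathcal{O}_{F,S}\xrightarrow{\ \gamma\ } WK^{\mbox{\'{e}t}}_{2i-2}F\longrightarrow0.
\]

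Next I would identify $\operatorname{im}\partial$. By exactness $\operatorname{im}\partial\cong\operatorname{coker}(r_{1})$. A direct computation of the archimedean local cohomology gives $H^{1}(F_{v},\Z_{2}(i))=0$ at the complex places and, at a real place, $H^{1}(\R,\Z_{2}(i))=0$ for $i$ even and $\cong\Z/2$ for $i$ odd; hence $\bigoplus_{v\mid\infty}H^{1}(F_{v},\Z_{2}(i))$ is $0$ for $i$ even and $(\Z/2)^{r_{1}}$ for $i$ odd. As this target is $2$-torsion, $r_{1}$ factors through $H^{1}(F,\Z_{2}(i))/2\cong D_{F}^{(i)}/F^{\bullet^{2}}$, and I would check, by tracing Kummer theory and the twist $H^{1}(F,\Z/2(i))\cong F^{\bullet}/F^{\bullet^{2}}(i-1)$ through the real localizations, that the induced map coincides with the signature map $\mathrm{sgn}_{F}$ (the even case being consistent, since there the target vanishes and $D_{F}^{+(i)}=D_{F}^{(i)}$). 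Thus its kernel is $D_{F}^{+(i)}/F^{\bullet^{2}}$ by the definition of $D_{F}^{+(i)}$, and its cokernel is $\cong\operatorname{im}\partial$; splicing the short exact sequence above with the exact sequence $0\to D_{F}^{+(i)}/F^{\bullet^{2}}\to D_{F}^{(i)}/F^{\bullet^{2}}\xrightarrow{\mathrm{sgn}_{F}}\bigoplus_{v\mid\infty}H^{1}(F_{v},\Z_{2}(i))$ produces exactly the six-term sequence of the statement. The two special cases are then immediate: for $i$ even the archimedean $H^{1}$ vanishes, so $\partial=0$ and $\gamma$ is an isomorphism; for $i$ odd one has $\operatorname{coker}(\mathrm{sgn}_{F})=(\Z/2)^{\delta_{i}(F)}$.

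The hard part will be the two compatibility verifications rather than the diagram chase: pinning down that $\lambda_{+}=\mathrm{res}_{S_{f}}\circ\gamma$ (which requires matching the connecting maps of the ``plus'' sequence coming from $0\to M\to\bigoplus\mathrm{Ind}^{G_{F}}_{G_{F_{v}}}M\to M_{+}\to0$ with the Poitou--Tate localization of Proposition~\ref{exact sequence of finie places}), and that the factored archimedean restriction $r_{1}$ is literally $\mathrm{sgn}_{F}$ and not merely a map sharing its cokernel. Once these naturality statements are in place, everything else is formal, and $S$-independence of $WK^{\mbox{\'{e}t},+}_{2i-2}\mathcal{O}_{F,S}$ follows since all remaining terms of the six-term sequence are independent of $S$.
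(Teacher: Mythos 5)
Your proof is correct and follows essentially the same route as the paper's: both rest on the long exact sequence of Proposition \ref{Proposition properties of positive cohomology}(i) for $M=\Z_{2}(i)$, the observation that the archimedean $H^{1}$ is $2$-torsion so that the first map factors through $H^{1}(G_{F,S},\Z_{2}(i))/2\cong D_{F}^{(i)}/F^{\bullet^{2}}$ and is then identified with $\mathrm{sgn}_{F}$, and a diagram chase comparing the two localization maps. The only difference is organizational: the paper packages the chase as a snake lemma applied to the diagram with bottom row $0\to\oplus_{v\in S_{f}}H^{2}_{v}\to\oplus_{v\in S}H^{2}_{v}\to\oplus_{v\mid\infty}H^{2}_{v}\to 0$, whereas you restrict the comparison map $\gamma$ to the wild kernels directly, both arguments hinging on the same compatibility $\lambda_{+}=\mathrm{res}_{S_{f}}\circ\gamma$ that you rightly flag.
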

	\begin{proof}
		On the one hand, the exact sequence
		\begin{equation*}
		\xymatrix@=2pc{0\ar[r]& \Z_{2}(i)\ar[r]^-{2}& \Z_{2}(i)\ar[r]& \Z/2(i)\ar[r]&0}
		\end{equation*}
		gives rise to an exact commutative diagram
		\begin{equation*}
		\xymatrix@=2pc{ 0\ar[r]& H^{1}(G_{F,S},\Z_{2}(i))/2\ar[r]\ar[d]& H^{1}(G_{F,S},\Z/2(i))\ar[d]\ar[r]&H^{2}(G_{F,S},\Z_{2}(i))\ar[d] \\
			0\ar[r]&\oplus_{v\mid\infty}H^{1}(F_{v},\Z_{2}(i))/2\ar[r] & \oplus_{v\mid \infty}H^{1}(F_{v},\Z/2(i))\ar[r]& \oplus_{v\mid \infty}H^{1}(F_{v},\Z_{2}(i))  }
		\end{equation*}		
		where the vertical maps are the localization maps. Since
		\begin{equation*}
		2H^{1}(F_{v},\Z_{2}(i))=0
		\end{equation*}
		for all infinite place $v$ of $F$, we get 	
		
		\begin{equation*}
		\xymatrix@=2pc{  H^{1}(G_{F,S},\Z_{2}(i))/2\ar@{^{(}->}[r]\ar[d]& H^{1}(G_{F,S},\Z/2(i))\ar[d]\\
			\oplus_{v\mid\infty}H^{1}(F_{v},\Z_{2}(i))\ar@{^{(}->}[r] & \oplus_{v\mid \infty}H^{1}(F_{v},\Z/2(i)) }
		\end{equation*}		
		Observe that the composite
		\begin{equation*}
		\xymatrix@=1.5pc{ D_{F}^{(i)}/F^{\bullet^{2}}\cong H^{1}(G_{F,S},\Z_{2}(i))/2\ar[r]&
			\oplus_{v\mid \infty}H^{1}(F_{v},\Z_{2}(i))\ar[r]&
			\oplus_{v\mid\infty}H^{1}(F_{v},\Z/2(i))\cong (\Z/2)^{r_{1}}}
		\end{equation*}		
		is the signature map
		\begin{equation*}
		\xymatrix@=2pc{ \mathrm{sgn}_{F}:\;H^{1}(F,\Z_{2}(i))/2\ar[r]&
			(\Z/2)^{r_{1}}}
		\end{equation*}
		and then
		\begin{equation}\label{D plus}
		D^{+(i)}_{F}/F^{\bullet^{2}}\cong \ker(\xymatrix@=2pc{H^{1}(G_{F,S},\Z_{2}(i))/2\ar[r]&
			\oplus_{v\mid \infty}H^{1}(F_{v},\Z_{2}(i))}).
		\end{equation}
		On the other hand, by the definition of totally positive Galois cohomology, we have the exact sequence
		\begin{equation*}
		\xymatrix@=1.5pc{H^{1}(G_{F,S},\Z_{2}(i))\ar[r]&\bigoplus_{v\mid
				\infty}H^{1}_{v}\ar[r]&H^{2}_{+}(G_{F,S},\Z_{2}(i))\ar[r]&
			H^{2}(G_{F,S},\Z_{2}(i))\ar[r]&\bigoplus_{v\mid\infty}H^{2}_{v}\ar[r]&0,}
		\end{equation*}
		where for $n=1\,\mbox{or}\,2$, $H^{n}_{v}$ denotes the cohomology group $H^{n}(F_{v},\Z_{2}(i))$.
		Since
		\begin{equation*}
		2H^{1}(F_{v},\Z_{2}(i))=0
		\end{equation*}
		for all infinite place $v$ of $F$, we get
		\begin{equation*}
		\xymatrix@=1.5pc{H^{1}(G_{F,S},\Z_{2}(i))/2\ar[r]&\bigoplus_{v\mid
				\infty}H^{1}_{v}\ar[r]&H^{2}_{+}(G_{F,S},\Z_{2}(i))\ar[r]&
			H^{2}(G_{F,S},\Z_{2}(i))\ar@{->>}[r]&\bigoplus_{v\mid\infty}H^{2}_{v}.}
		\end{equation*}
		Therefore, we have the following exact commutative diagram
		\begin{equation*}
		\xymatrix@=1.5pc{H^{1}(G_{F,S},\Z_{2}(i))/2\ar[r]&\bigoplus_{v\mid
				\infty}H^{1}_{v}\ar[r]&H^{2}_{+}(G_{F,S},\Z_{2}(i))\ar[r]\ar[d]&
			H^{2}(G_{F,S},\Z_{2}(i))\ar@{->>}[r]\ar[d]&\bigoplus_{v\mid\infty}H^{2}_{v}\ar@{=}[d]\\
			&   0\ar[r] & \bigoplus_{v\in S_{f}}H^{2}_{v}\ar[r]& \oplus_{v\in S}H^{2}_{v}\ar@{->>}[r]& \bigoplus_{v\mid\infty}H^{2}_{v} }
		\end{equation*}
		By the snake lemma and (\ref{D plus}), we obtain the exact sequence		
		\begin{equation*}
		\xymatrix@=1.5pc{ 0\ar[r]&D_{F}^{+(i)}/F^{\bullet^{2}}\ar[r]&
			D_{F}^{(i)}/F^{\bullet^{2}}\ar[r]&\oplus_{v\mid
				\infty}H^{1}(F_{v},\Z_{2}(i))\ar[r]&
			WK^{\mbox{\'{e}t},+}_{2i-2}\mathcal{O}_{F,S}\ar[r]&	WK^{\mbox{\'{e}t}}_{2i-2}F\ar[r]&0.}
		\end{equation*}		
		Since $\bigoplus_{v\mid\infty}H^{1}(F_{v},\Z_{2}(i))$ is isomorphic to $(\Z/2)^{r_{1}}$ if $i$ is odd, and is trivial if $i$ is even, we obtain
		
		\begin{itemize}
			\item for $i$  even, $	WK_{2i-2}^{\mbox{\'{e}t},+}\mathcal{O}_{F,S}\cong WK^{\mbox{\'{e}t}}_{2i-2}F$, and
			\item for $i$  odd, we have the exact sequence
			 	\begin{equation}\label{exat plus etale}
			\xymatrix@=2pc{0\ar[r]& (\Z/2)^{\delta_{i}(F)}\ar[r]&  WK_{2i-2}^{\mbox{\'{e}t},+}\mathcal{O}_{F,S}\ar[r]& 	WK^{\mbox{\'{e}t}}_{2i-2}F\ar[r]&0,}
			\end{equation}
			where  $\delta_{i}(F)$ is the $2$-rank of the cokernel of the signature map $\mathrm{sgn}_{F}$.
		\end{itemize}
	\end{proof}
	The following corollary shows that $WK_{2i-2}^{\mbox{\'{e}t},+}\mathcal{O}_{F,S}$ is in fact independent of the set $S$ containing $S_{2\infty}=S_{2}\cup S_{\infty}$ for  $i\geq 2$.
	\begin{coro}\label{coro independent}
		For  $i\geq 2$, the positive \'{e}tale wild kernel
		$WK_{2i-2}^{\mbox{\'{e}t},+}\mathcal{O}_{F,S}$ is independent of the set $S$ containing $S_{2\infty}$.
	\end{coro}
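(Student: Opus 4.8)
The strategy is to read the independence straight off the exact sequence of Proposition \ref{Proposition DF and WF}, whose only term involving the finite primes of $S$ is the middle one, and then to promote the resulting equality of orders to a canonical isomorphism by making that sequence functorial in $S$.

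First I would check that the four outer nonzero terms of the sequence
\begin{equation*}
0\to D_{F}^{+(i)}/F^{\bullet^{2}}\to D_{F}^{(i)}/F^{\bullet^{2}}\to \oplus_{v\mid\infty}H^{1}(F_{v},\Z_{2}(i))\to WK^{\mbox{\'{e}t},+}_{2i-2}\mathcal{O}_{F,S}\to WK^{\mbox{\'{e}t}}_{2i-2}F\to 0
\end{equation*}
are independent of the set $S\supseteq S_{2\infty}$. Indeed, the \'etale Tate kernel $D_{F}^{(i)}$ and its positive part $D_{F}^{+(i)}$ are subgroups of $F^{\bullet}$ cut out by the archimedean signature map $\mathrm{sgn}_{F}$, so they involve no finite primes; the term $\oplus_{v\mid\infty}H^{1}(F_{v},\Z_{2}(i))$ depends only on the real places of $F$; and the ordinary \'etale wild kernel $WK^{\mbox{\'{e}t}}_{2i-2}F$ is independent of $S$ for $i\ge 2$ by \cite[\S 6, Lemma 1]{Sc 79} and \cite[page 336]{Ko 03}. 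Consequently the order of the middle term $WK^{\mbox{\'{e}t},+}_{2i-2}\mathcal{O}_{F,S}$ is already forced to be the same for every such $S$.

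To upgrade this numerical coincidence to a canonical independence, I would make the sequence functorial in $S$. For $S\subseteq S'$ both containing $S_{2\infty}$, the surjection $G_{F,S'}\twoheadrightarrow G_{F,S}$ induces inflation maps on $H^{\bullet}(-,\Z_{2}(i))$; since the totally positive cohomology $H^{\bullet}_{+}$ is built only from the induced modules at the archimedean places---which are unchanged when $S$ grows at finite primes---these inflations are compatible both with the functor $M\mapsto M_{+}$ and with the localization maps at $S_{f}$. This yields a commutative ladder between the two instances of the above exact sequence in which the four outer vertical arrows are the canonical $S$-independent isomorphisms just recalled, and the five lemma then produces the desired isomorphism $WK^{\mbox{\'{e}t},+}_{2i-2}\mathcal{O}_{F,S}\xrightarrow{\ \sim\ }WK^{\mbox{\'{e}t},+}_{2i-2}\mathcal{O}_{F,S'}$.

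For $i$ even the whole argument collapses to the isomorphism $WK^{\mbox{\'{e}t},+}_{2i-2}\mathcal{O}_{F,S}\cong WK^{\mbox{\'{e}t}}_{2i-2}F$ already furnished by Proposition \ref{Proposition DF and WF}, so the claim is immediate there; the genuine content is the odd case, where one applies the short five lemma to the ladder attached to the short exact sequence (\ref{exat plus etale}). Accordingly, the step I expect to be the main obstacle is exactly this compatibility bookkeeping: verifying that inflation commutes with the totally positive construction and with the Poitou--Tate localization, and in particular that the induced vertical map on $WK^{\mbox{\'{e}t}}_{2i-2}F$ is the known $S$-independence isomorphism, so that four of the five vertical arrows really are isomorphisms. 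Once that is in place, the five lemma closes the argument.
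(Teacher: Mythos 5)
Your proposal is correct in outline and shares the paper's two main ingredients: the exact sequence of Proposition \ref{Proposition DF and WF}, whose outer terms are $S$-independent and hence force the order of $WK^{\mbox{\'{e}t},+}_{2i-2}\mathcal{O}_{F,S}$ to be the same for all $S\supseteq S_{2\infty}$, and the inflation map as the comparison morphism between two admissible sets. Where you diverge is in the closing step. The paper does not functorialize the whole five-term sequence; it uses only the defining exact sequence $0\to WK^{\mbox{\'{e}t},+}_{2i-2}\mathcal{O}_{F,S}\to H^{2}_{+}(G_{F,S},\Z_{2}(i))\to \oplus_{v\in S_{f}}H^{2}(F_{v},\Z_{2}(i))$, observes that inflation on the middle term is injective and compatible with localization, deduces an \emph{injection} $WK^{\mbox{\'{e}t},+}_{2i-2}\mathcal{O}_{F,S_{2\infty}}\hookrightarrow WK^{\mbox{\'{e}t},+}_{2i-2}\mathcal{O}_{F,S}$ from a single commutative square, and concludes by the already-established equality of finite orders. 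This sidesteps exactly the ``compatibility bookkeeping'' you flag as the main obstacle: in your route one must verify that inflation is compatible with the identification $D_{F}^{(i)}/F^{\bullet^{2}}\cong H^{1}(G_{F,S},\Z_{2}(i))/2$, with the archimedean localization maps, and with both maps into and out of $WK^{\mbox{\'{e}t},+}_{2i-2}\mathcal{O}_{F,S}$, before the five lemma applies. Your argument is viable, but the order count you set up in your first paragraph already does most of the work; pairing it with the one injectivity statement, rather than the full ladder, is the more economical finish and is what the paper actually does.
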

	\begin{proof} Since $\delta_{i}(F)$ and $WK^{\mbox{\'{e}t}}_{2i-2}F$ are independent of the set $S$ \cite[page 336 ]{Ko 03}, the exact sequence (\ref{exat plus etale}) shows that 	the order of $	WK^{\mbox{\'{e}t},+}_{2i-2}\mathcal{O}_{F,S}$ is also independent of $S$. Therefore it suffices to prove that there exists an injective map from $WK_{2i-2}^{\mbox{\'{e}t},+}\mathcal{O}_{F,S_{2\infty}}$ to $WK_{2i-2}^{\mbox{\'{e}t},+}\mathcal{O}_{F,S}$. But this follows from the exact commutative diagram
		\begin{equation*}
		\xymatrix@=1.5pc{ 0\ar[r]&WK_{2i-2}^{\mbox{\'{e}t},+}\mathcal{O}_{F,S_{2\infty}}\ar[d]\ar[r]& H^{2}_{+}(G_{F,S_{2\infty}},\Z_{2}(i))\ar[r]\ar@{^{(}->}[d]& \oplus_{v\mid 2}H^{2}(F_{v},\Z_{2}(i))\ar@{^{(}->}[d]\\
			0\ar[r]&  WK_{2i-2}^{\mbox{\'{e}t},+}\mathcal{O}_{F,S}\ar[r]&
			H^{2}_{+}(G_{F,S},\Z_{2}(i))\ar[r]& \oplus_{v\in S_{f}}H^{2}(F_{v},\Z_{2}(i))}
		\end{equation*}
		where  the middle vertical  map is the inflation map.
	\end{proof}
	From now on, we make the notation
	\begin{equation*}
	WK^{\mbox{\'{e}t},+}_{2i-2}\mathcal{O}_{F,S}:=	WK^{\mbox{\'{e}t},+}_{2i-2}F,\;\mbox{for $i\geq 2$}.
	\end{equation*}
	We finish  this subsection by giving a description of $WK_{2i-2}^{\mbox{\'{e}t},+}F$ as an Iwasawa module. \vskip 6pt
	Let  $X^{\prime}_{\infty}$ be the
	Galois group of the maximal unramified $p$-extension of the
	cyclotomic $\Z_{p}$-extension of $F$, which is completely
	decomposed at all primes above $p$. It is well known that (\cite[Lemma 1,\S 6]{Sc 79}) 	for any odd prime $p$
	\begin{equation}\label{description of WK}
	WK_{2i-2}^{\mbox{\'{e}t}}F\cong X^{\prime}_{\infty}(i-1)_{\mathrm{Gal}(F(\mu_{p^{\infty}})/F)}.
	\end{equation}
 In the next proposition, we prove an analogue result in the case $p=2$.
	\vskip 6pt
	Let $F_{\infty}$ be  the cyclotomic	$\Z_{2}$-extension of $F$ with Galois group  $\Gamma=\mathrm{Gal}(F_{\infty}/F)$, and let
	$X^{\prime,+}_{\infty}$ be the Galois group of the maximal $2$-extension of $F_{\infty}$, which is unramified at finite places and
	completely decomposed at all primes above $2$.
	\begin{pro}\label{Prop ind}
		Let $i\geq 2$ be an integer. If either $i$ is odd, or $i$ is even and $\sqrt{-1}\in F$, then
		\begin{equation*}
		WK_{2i-2}^{\mbox{\'{e}t},+}F\cong X^{\prime,+}_{\infty}(i-1)_{\Gamma}.
		\end{equation*}
		In particular, in both cases we recover that the group $WK_{2i-2}^{\mbox{\'{e}t},+}F$ is
		independent of the set $S$ containing $S_{2\infty}$.
	\end{pro}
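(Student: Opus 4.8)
The plan is to reduce the statement to an Iwasawa-theoretic descent along the cyclotomic tower $F_{\infty}/F$, passing through Poitou--Tate duality, in the spirit of the proof of (\ref{description of WK}) for odd $p$; the novelty at $p=2$ is that the totally positive cohomology is exactly what forces the \emph{narrow} module $X^{\prime,+}_{\infty}$ to appear. Since $i\geq 2$ the group $WK_{2i-2}^{\mbox{\'{e}t},+}F$ is finite and, by Corollary~\ref{coro independent}, independent of $S\supseteq S_{2\infty}$; so I may take $S=S_{2\infty}$, whence $S_{f}=S_{2}$. By definition $WK_{2i-2}^{\mbox{\'{e}t},+}F=\mathrm{III}_{S}^{2,+}(\Z_{2}(i))$, and the Poitou--Tate duality of Corollary~\ref{ Global duality}, applied to $M=\Z_{2}(i)$ (so $M^{\ast}=\Q_{2}/\Z_{2}(1-i)$), gives
\begin{equation*}
WK_{2i-2}^{\mbox{\'{e}t},+}F\;\cong\;\mathrm{III}_{S}^{1}(\Q_{2}/\Z_{2}(1-i))^{\vee}.
\end{equation*}
It is essential that the dual side involves only the \emph{finite} primes of $S$: imposing no condition at the archimedean places is precisely the narrow (archimedean-ramification-allowed) condition that produces $X^{\prime,+}_{\infty}$. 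By finiteness it then suffices to construct a natural isomorphism $\mathrm{III}_{S}^{1}(\Q_{2}/\Z_{2}(1-i))\cong\bigl(X^{\prime,+}_{\infty}(i-1)_{\Gamma}\bigr)^{\vee}$ and dualize.

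Next I read off the role of the hypothesis. Writing $\chi$ for the cyclotomic character, $G_{F_{\infty}}$ acts on $\Q_{2}/\Z_{2}(1-i)$ through $\chi^{1-i}|_{G_{F_{\infty}}}$. As $F_{\infty}$ is the cyclotomic $\Z_{2}$-extension, $\chi|_{G_{F_{\infty}}}$ is trivial when $\sqrt{-1}\in F$ and equals the nontrivial quadratic character of $F_{\infty}(\sqrt{-1})/F_{\infty}$ otherwise; hence $\chi^{1-i}|_{G_{F_{\infty}}}$ is trivial exactly when $i$ is odd, or $i$ is even and $\sqrt{-1}\in F$. In other words, the hypothesis is \emph{equivalent} to $G_{F_{\infty}}$ acting trivially on $\Q_{2}/\Z_{2}(1-i)$. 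Under it, as $G_{F_{\infty}}$-module $\Q_{2}/\Z_{2}(1-i)\cong\Q_{2}/\Z_{2}$ with trivial action, so that $H^{1}(G_{F_{\infty},S},\Q_{2}/\Z_{2}(1-i))=\mathrm{Hom}(G_{F_{\infty},S},\Q_{2}/\Z_{2})$, and class field theory identifies the subgroup $\mathrm{III}_{S}^{1}(F_{\infty},\Q_{2}/\Z_{2}(1-i))$ cut out by triviality at all primes above $2$ (equivalently: unramified at every finite prime and completely decomposed above $2$, since $S_{f}=S_{2}$) with $\mathrm{Hom}(X^{\prime,+}_{\infty},\Q_{2}/\Z_{2})$. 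The residual $\Gamma$-action on this Hom group is diagonal, combining the Galois action on $X^{\prime,+}_{\infty}$ with the twist $\chi^{1-i}$ on the coefficients; as a $\Gamma$-module it is therefore $\mathrm{Hom}(X^{\prime,+}_{\infty}(i-1),\Q_{2}/\Z_{2})$, which is how the Tate twist $(i-1)$ enters.

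The decisive step is the descent from $F_{\infty}$ to $F$. Since $\Gamma\cong\Z_{2}$ has $2$-cohomological dimension $1$, the Hochschild--Serre sequence gives, globally and at each $v\in S_{f}$ (with $w\mid v$ in $F_{\infty}$ and decomposition group $\Gamma_{v}$), short exact sequences relating $H^{1}$ over $F$ to the $\Gamma$-invariants of $H^{1}$ over $F_{\infty}$, with kernels the correction terms $H^{1}(\Gamma,H^{0}(G_{F_{\infty},S},\Q_{2}/\Z_{2}(1-i)))$ and $H^{1}(\Gamma_{v},H^{0}(F_{\infty,w},\Q_{2}/\Z_{2}(1-i)))$. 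Comparing these and showing that the induced map on kernels of localization is an isomorphism, i.e.
\begin{equation*}
\mathrm{III}_{S}^{1}(\Q_{2}/\Z_{2}(1-i))\;\cong\;\mathrm{III}_{S}^{1}(F_{\infty},\Q_{2}/\Z_{2}(1-i))^{\Gamma},
\end{equation*}
is the main obstacle, because the $H^{0}$-terms are nontrivial finite groups that do not vanish individually (the coefficients being divisible, $H^{0}$ is a copy of $\Q_{2}/\Z_{2}$ on which $\gamma-1$ acts by $\chi^{1-i}(\gamma)-1$, of positive valuation for $i\neq1$). I expect to handle this by a second local--global argument, applying Proposition~\ref{exact sequence of finie places} in degree $0$ together with the snake lemma, so that the global and local $H^{0}$-corrections cancel on the kernel of localization.

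Granting the descent isomorphism, taking $\Gamma$-invariants of the $\Gamma$-equivariant identification of the previous paragraph turns $\mathrm{Hom}(X^{\prime,+}_{\infty}(i-1),\Q_{2}/\Z_{2})$ into
\begin{equation*}
\mathrm{Hom}\bigl(X^{\prime,+}_{\infty}(i-1),\Q_{2}/\Z_{2}\bigr)^{\Gamma}=\mathrm{Hom}\bigl(X^{\prime,+}_{\infty}(i-1)_{\Gamma},\Q_{2}/\Z_{2}\bigr)=\bigl(X^{\prime,+}_{\infty}(i-1)_{\Gamma}\bigr)^{\vee},
\end{equation*}
using that invariants of a Pontryagin dual compute the dual of the coinvariants. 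Combining with the first step and dualizing back (legitimate by finiteness) yields $WK_{2i-2}^{\mbox{\'{e}t},+}F\cong X^{\prime,+}_{\infty}(i-1)_{\Gamma}$; the asserted $S$-independence is then immediate, as the right-hand side does not involve $S$.
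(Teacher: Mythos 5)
Your overall route is the same as the paper's: reduce via the Poitou--Tate duality of Corollary~\ref{ Global duality} to computing $\mathrm{III}_{S}^{1}(\Q_{2}/\Z_{2}(1-i))$, descend along $F_{\infty}/F$ by inflation--restriction, identify the sha group over $F_{\infty}$ with $\mathrm{Hom}(X^{\prime,+}_{\infty},\Q_{2}/\Z_{2})(i-1)$ by class field theory, and dualize. The duality step, the reading of the hypothesis as triviality of the $G_{F_{\infty}}$-action on $\Q_{2}/\Z_{2}(1-i)$, and the final passage from $\Gamma$-invariants of the dual to the dual of coinvariants are all correct and match the paper.

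The gap is at the step you yourself flag as ``the main obstacle.'' You assert that the Hochschild--Serre correction terms $H^{1}(\Gamma,H^{0}(G_{F_{\infty},S},\Q_{2}/\Z_{2}(1-i)))$ and $H^{1}(\Gamma_{v},H^{0}(F_{v,\infty},\Q_{2}/\Z_{2}(1-i)))$ are nontrivial finite groups and then propose, without carrying it out, a ``second local--global argument'' to make them cancel. In fact these groups vanish, and this is the whole point of the hypothesis: since $\Gamma\cong\Z_{2}$ acts on the discrete module $\Q_{2}/\Z_{2}(1-i)$ through $\chi^{1-i}$, one has $H^{1}(\Gamma,\Q_{2}/\Z_{2}(1-i))\cong\bigl(\Q_{2}/\Z_{2}\bigr)/\bigl(\chi^{1-i}(\gamma)-1\bigr)\bigl(\Q_{2}/\Z_{2}\bigr)$ by \cite[\S XIII.1, Proposition 1]{Se 68}, and multiplication by the \emph{nonzero} scalar $\chi^{1-i}(\gamma)-1$ (nonzero precisely because $i\neq 1$) is surjective on the divisible group $\Q_{2}/\Z_{2}$; its positive valuation makes the kernel $H^{0}$ nontrivial but the cokernel $H^{1}$ zero. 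You appear to have conflated the two. With this observation (the paper's equation (\ref{Tate lemma})) the descent isomorphism $\mathrm{III}_{S}^{1}(\Q_{2}/\Z_{2}(1-i))\cong\ker\bigl(H^{1}(G_{F_{\infty},S},\Q_{2}/\Z_{2}(1-i))^{\Gamma}\to\oplus_{v}H^{1}(F_{v,\infty},\Q_{2}/\Z_{2}(1-i))^{\Gamma}\bigr)$ is immediate from the commutative diagram (the left-hand kernel and cokernel both vanish, so the snake map has nothing to hit), and no auxiliary degree-$0$ argument is needed. As written, your proof is incomplete at exactly the decisive point, and the claim motivating your workaround is false.
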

	\begin{proof} First observe that, if $i$ is odd or $i$ is even and $\sqrt{-1}\in F$, then
		\begin{equation}\label{Tate lemma}
		H^{1}(\Gamma,\Q_{2}/\Z_{2}(1-i))=0.
		\end{equation}
		Indeed, in both cases, $\Q_{2}/\Z_{2}(1-i)$ is a $\Gamma$-module and using \cite[\S XIII.1, Proposition 1]{Se 68}, we see that
		\begin{equation*}
		H^{1}(\Gamma,\Q_{2}/\Z_{2}(1-i))\cong \big(\Q_{2}/\Z_{2}(1-i)\big)/(\gamma-1).\big(\Q_{2}/\Z_{2}(1-i)\big),
		\end{equation*}
		where $\gamma$ is a topological generator of $\Gamma$. Hence
		\begin{equation*}
		H^{1}(\Gamma,\Q_{2}/\Z_{2}(1-i))=0.
		\end{equation*}
		Now we consider  the following  exact commutative diagram
		\begin{equation*}
		\xymatrix@=1.5pc{
			H^{1}(\Gamma,\Q_{2}/\Z_{2}(j))\ar@{^{(}->}[r]\ar[d]&
			H^{1}(G_{F,S},\Q_{2}/\Z_{2}(j))\ar[r]\ar[d]&
			H^{1}(G_{F_{\infty},S},\Q_{2}/\Z_{2}(j))^{\Gamma}\ar[d]\ar[r]&0\\
			 \bigoplus_{v\in
				S_{f}}H^{1}(\Gamma_{v},\Q_{2}/\Z_{2}(j))\ar@{^{(}->}[r]&
			\bigoplus_{v\in
				S_{f}}H^{1}(F_{v},\Q_{2}/\Z_{2}(j))\ar[r]&\bigoplus_{v\in
				S_{f}}H^{1}(F_{v,\infty},\Q_{2}/\Z_{2}(j))^{\Gamma}\ar[r]&0}
		\end{equation*}
		where $j=1-i$,
		$H^{1}(F_{v,\infty},\Q_{2}/\Z_{2}(j))=\bigoplus_{w\mid
			v}H^{1}(F_{w,\infty},\Q_{2}/\Z_{2}(j))$, and
		$\Gamma_{v}$ denotes the decomposition group of $v$ in
		$F_{\infty}/F$.
		By (\ref{Tate lemma}), we have
		\begin{equation*}
		H^{1}(\Gamma,\Q_{2}/\Z_{2}(j))=0\,\,\mbox{and}\,\,H^{1}(\Gamma_{v},\Q_{2}/\Z_{2}(j))=0\;\;\mbox{for all $v\in S_{f}$},
		\end{equation*}
		and then
		\begin{eqnarray*}
			% \nonumber to remove numbering (before each equation)
			\mathrm{III}_{S}^{1}(\Q_{2}/\Z_{2}(j)) &=&\ker (\xymatrix@=1.5pc{H^{1}(G_{F_{\infty},S},\Q_{2}/\Z_{2}(j))^{\Gamma}
				\ar[r]& \bigoplus_{v\in
					S_{f}}H^{1}(F_{v,\infty},\Q_{2}/\Z_{2}(j))^{\Gamma}})  \\
			&=&\mathrm{Hom}(X^{\prime,+}_{\infty},
			\Q_{2}/\Z_{2})(-j)^{\Gamma}.
		\end{eqnarray*}
		Hence, using the duality
		\begin{equation*}
		WK_{2i-2}^{\mbox{\'{e}t},+}F\cong
		\mathrm{III}_{S}^{1}(\Q_{2}/\Z_{2}(1-i))^{\vee}
		\end{equation*}
		(Corollary \ref{ Global duality}),
		we obtain the isomorphism
		\begin{equation*}
		WK_{2i-2}^{\mbox{\'{e}t},+}F\cong X^{\prime,+}_{\infty}(i-1)_{\Gamma}.
		\end{equation*}
	\end{proof}
	Let  $F_{\infty}=\cup_{n}F_{n}$ be the cyclotomic $\Z_{2}$-extension of $F$ and for
	$n\geq 0$,  $G_{n}=\mathrm{Gal}(F_{n}/F)$. The above description of the positive \'{e}tale wild kernel, leads immediately  to the following corollary:
	\begin{coro}
		If either $i$ is odd, or $i$ is even and $\sqrt{-1}\in F$, then the positive \'{e}tale wild kernel satisfies Galois co-descent in the cyclotomic $\Z_{2}$-extension:
		\begin{equation*}
		(WK_{2i-2}^{\mbox{\'{e}t},+}F_{n})_{G_{n}}\cong 	WK_{2i-2}^{\mbox{\'{e}t},+}F.
		\end{equation*}
	\end{coro}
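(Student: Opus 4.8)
The plan is to read off the corollary directly from the Iwasawa-module description furnished by Proposition \ref{Prop ind}, applied simultaneously to the base field $F$ and to each intermediate layer $F_n$. Write $M = X^{\prime,+}_{\infty}(i-1)$, viewed as a module over $\Gamma = \mathrm{Gal}(F_\infty/F)$, and set $\Gamma_n = \mathrm{Gal}(F_\infty/F_n)$, so that $G_n = \Gamma/\Gamma_n$. The first point to record is that the hypothesis of Proposition \ref{Prop ind} propagates to $F_n$: when $i$ is odd there is nothing to check, and when $i$ is even with $\sqrt{-1}\in F$ we also have $\sqrt{-1}\in F_n$. Moreover $F_\infty$ is itself the cyclotomic $\Z_2$-extension of $F_n$, and the module $X^{\prime,+}_\infty$ --- being the Galois group of the maximal $2$-extension of $F_\infty$ that is unramified at finite places and completely decomposed above $2$ --- is intrinsic to $F_\infty$ and hence unchanged when the base is moved from $F$ to $F_n$.

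With these remarks in place I would apply Proposition \ref{Prop ind} with base field $F_n$ to obtain an isomorphism $WK_{2i-2}^{\mbox{\'{e}t},+}F_n \cong M_{\Gamma_n}$, and then take $G_n$-coinvariants. By transitivity of coinvariants along the tower $\Gamma \supset \Gamma_n \supset 1$, one has $(M_{\Gamma_n})_{G_n} = M_\Gamma$, and the right-hand side is exactly $WK_{2i-2}^{\mbox{\'{e}t},+}F$ by Proposition \ref{Prop ind} for $F$. Chaining these identifications yields
\begin{equation*}
(WK_{2i-2}^{\mbox{\'{e}t},+}F_n)_{G_n} \cong (M_{\Gamma_n})_{G_n} = M_\Gamma \cong WK_{2i-2}^{\mbox{\'{e}t},+}F,
\end{equation*}
which is the asserted co-descent.

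The one step requiring genuine care --- and the main obstacle --- is the $G_n$-equivariance of the isomorphism of Proposition \ref{Prop ind} when it is applied to $F_n$: one must check that the natural $G_n$-action on $WK_{2i-2}^{\mbox{\'{e}t},+}F_n$ (coming from the functoriality of totally positive cohomology in the extension $F_n/F$, as in the corestriction isomorphism (\ref{cor iso})) corresponds, under that isomorphism, to the residual $\Gamma/\Gamma_n$-action on the coinvariants $M_{\Gamma_n}$. I would settle this by tracing the construction: the duality $WK_{2i-2}^{\mbox{\'{e}t},+}F_n \cong \mathrm{III}_S^1(\Q_2/\Z_2(1-i))^{\vee}$ of Corollary \ref{ Global duality} is functorial in the field, and the identification of $\mathrm{III}_S^1$ with the $\Gamma_n$-fixed part of $\mathrm{Hom}(X^{\prime,+}_\infty,\Q_2/\Z_2)(i-1)$ is manifestly compatible with restricting the ambient $\Gamma$-action to $\Gamma_n$ and letting $\Gamma/\Gamma_n$ act on the quotient. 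Once this compatibility is confirmed, the equivariance is automatic and the chain of isomorphisms above is a chain of $G_n$-equivariant (indeed canonical) isomorphisms, completing the proof.
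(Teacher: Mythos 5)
Your proposal is correct and follows exactly the route the paper intends: the corollary is stated there as an immediate consequence of Proposition \ref{Prop ind}, applied to each layer $F_n$ (whose cyclotomic $\Z_2$-extension is again $F_\infty$, with the hypothesis on $i$ and $\sqrt{-1}$ inherited from $F$), followed by transitivity of coinvariants $(M_{\Gamma_n})_{G_n}=M_\Gamma$. Your extra care about the $G_n$-equivariance of the identification is a point the paper leaves implicit, but it is the same argument.
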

Compare to  \cite[Theorem 2.18]{Ko-Mo}, which deals with the case $i=2$ and  $\sqrt{-1} \in F$. If $p$ is odd, the Galois co-descent holds  in the cyclotomic tower  as a consequence  of Schneider's description of the \'{e}tale wild kernel.
	\section{Genus formula}
	Let $E/F$ be a  Galois extension of number fields with Galois group
	$G$. Let $S$ denote the set of infinite places, $2$-adic places and
	those which ramify in $E/F$. We denote also by $S$ the set of places
	of $E$ above places in $S$. In the sequel we assume that $i\geq 2$.
	\vskip 6pt
	By the definition of $WK_{2i-2}^{\mbox{\'{e}t},+}F$ and Proposition \ref{exact sequence of finie places}, we have the exact sequence
	\begin{equation*}
	\xymatrix@=2pc{0\ar[r]&
		WK_{2i-2}^{\mbox{\'{e}t},+}F\ar[r]&H^{2}_{+}(G_{F,S},\Z_{2}(i))\ar[r]&
		\widetilde{\oplus}_{v\in
			S_{f}}H^{2}(F_{v},\Z_{2}(i))\ar[r]&0,}
	\end{equation*}
	where $\widetilde{\oplus}_{v\in
		S_{f}}H^{2}(F_{v},\Z_{2}(i))$ denotes the kernel of the surjective map
	\begin{equation*}
	\xymatrix@=2pc{ \oplus_{v\in
			S_{f}}H^{2}(F_{v},\Z_{2}(i))\ar@{->>}[r]&
		H^{0}(G_{F,S},\Q_{2}/\Z_{2}(1-i))^{\vee}}.
	\end{equation*}
	Then the corestriction map induces the exact commutative diagram
	\begin{equation}\label{diagramme commutive}
	\xymatrix@=1.5pc{
		&(WK_{2i-2}^{\mbox{\'{e}t},+}E)_{G}\ar[d]^-{N_{i}}\ar[r]&
		H^{2}_{+}(G_{E,S},\Z_{2}(i))_{G}\ar[d]^-{\wr}\ar[r]&
		(\widetilde{\oplus}_{w\in S_{f}}H^{2}(E_{w},\Z_{2}(i)))_{G}\ar[d]^-{N^{\prime}_{i}}\ar[r]&0\\
		0\ar[r]&
		WK_{2i-2}^{\mbox{\'{e}t},+}F\ar[r]&H^{2}_{+}(G_{F,S},\Z_{2}(i))\ar[r]&
		\widetilde{\oplus}_{v\in
			S_{f}}H^{2}(F_{v},\Z_{2}(i))\ar[r]&0}
	\end{equation}
	where the middle vertical map is an isomorphism by $(\ref{cor iso})$.
	Using the snake lemma, we get
	\begin{itemize}
		\item $\mathrm{coker}N_{i}\cong \ker N_{i}^{\prime}$.
		\item $\ker N_{i}\cong \mathrm{coker}(\widetilde{\alpha})$, where $\widetilde{\alpha}$ is the homology map
		\begin{equation*}
		\xymatrix@=2pc{
		 \widetilde{\alpha}:
		H_{1}(G,H^{2}_{+}(G_{E,S},\Z_{2}(i)))\ar[r]&
		H_{1}(G,\widetilde{\oplus}_{w\in
			S_{f}}H^{2}(E_{w},\Z_{2}(i))).}
		\end{equation*}
	\end{itemize}
We first determine $\mathrm{coker}N_{i}$, and then we give  a criterion of the surjectivity  of the morphism $N_{i}$. For this, the  exact commutative diagram
	\begin{equation}\label{exact commutaive diagram widetilde}
	\xymatrix@=1.5pc{ (\widetilde{\bigoplus}_{w\in
			S_{f}}H^{2}(E_{w},\Z_{2}(i)))_{G}\ar[r]\ar[d]^-{N_{i}^{\prime}}&
		(\displaystyle{\bigoplus_{w\in
				S_{f}}}H^{2}(E_{w},\Z_{2}(i)))_{G}\ar@{->>}[r]\ar[d]^-{\wr}&
		(H^{0}(E,\mathbb{Q}_{2}/\Z_{2}(1-i))^{\vee})_{G}\ar[d]^-{\wr}\\
		\widetilde{\bigoplus}_{v\in
			S_{f}}H^{2}(F_{v},\Z_{2}(i))\ar@{^{(}->}[r]&\bigoplus_{v\in
			S_{f}}H^{2}(F_{v},\Z_{2}(i))\ar@{->>}[r]&H^{0}(F,\Q_{2}/\Z_{2}(1-i))^{\vee}}
	\end{equation}
	shows that
	\begin{equation*}
	\ker N_{i}^{\prime}\cong \mathrm{coker}(\xymatrix@=1.5pc{
		H_{1}(G,\bigoplus_{w\in S_{f}}H^{2}(E_{w},\Z_{2}(i)))\ar[r]&
		H_{1}(G,H^{0}(E,\Q_{2}/\Z_{2}(1-i))^{\vee})).}
	\end{equation*}
	Now we  give a description of $H_{1}(G_{v},H^{2}(E_{w},\Z_{2}(i)))$ and
	$H_{1}(G,H^{0}(E,\Q_{2}/\Z_{2}(1-i))^{\vee})$. We need some notation
	\begin{tabbing}
		\hspace{0.4cm} \= \hspace{0.4cm} \= \kill
		% \> for next tab, \\ for new line...
		$E_{\infty}$ \> \>:  the cyclotomic
		$\Z_{2}$-extension of $E$.\\
		$G_{v}$\> \>:  the decomposition group of $v$ in $E/F$.\\
		$\Gamma_{v}$\> \>:  the decomposition group of $v$ in $F_{\infty}/F$.\\
		$H$\> \>: the $2$-part of the abelianization of $\mathrm{Gal}(E_{\infty}/F_{\infty})$.\\
		$H_{v}$\> \>:  the $2$-part of the abelianization of 	$\mathrm{Gal}(E_{w,\infty}/F_{v,\infty})$\\
		$L_{\infty}^+$\> \>: the maximal abelian $2$-extension of $F_{\infty}$, which is unramified at finite places \\
		\> \>   and completely decomposed at all primes above $2$.
	\end{tabbing}
	\begin{pro}\label{Proposition N surj}
		Let $i\geq 2$ be an integer and let $v$ be a finite place of $F$. If either $i$ is odd, or $i$ is even and $\sqrt{-1}\in F$, then we have
		\begin{tabbing}
			\hspace{0.4cm} \= \hspace{0.4cm} \= \kill
			$(1)$\>\>  $	H_{1}(G_{v},H^{2}(E_{w},\Z_{2}(i)))\cong
			H_{v}(i-1)_{\Gamma_{v}}$ and\\
			\> \> $	H_{1}(G,H^{0}(E,\Q_{2}/\Z_{2}(1-i))^{\vee})\cong H(i-1)_{\Gamma}.$\\
			$(2)$\> \>  $	\mathrm{coker}N_{i}\cong \mathrm{Gal}(L_{\infty}^+\cap
			E_{\infty}/F_{\infty})(i-1)_{\Gamma}$.
		\end{tabbing}
		In particular, the map $N_{i}$ is surjective if and only if $L_{\infty}^+\cap E_{\infty}=F_{\infty}$.
	\end{pro}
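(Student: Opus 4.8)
The plan is to establish the two isomorphisms of $(1)$ by an Iwasawa descent along the cyclotomic $\Z_{2}$-tower, which is the equivariant refinement of the argument already used for Proposition \ref{Prop ind}, and then to feed them into the snake-lemma identifications recorded just before the statement in order to compute $\mathrm{coker}\,N_{i}$.

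First I would treat $(1)$. Local duality $(\ref{local duality})$ gives $H^{2}(E_{w},\Z_{2}(i))\cong H^{0}(E_{w},\Q_{2}/\Z_{2}(1-i))^{\vee}$, and for a finite Galois module the Pontryagin dual of such an $H^{0}$ over $E_{w}$ (resp. over $E$) is the module of $G_{E_{w}}$- (resp. $G_{E}$-)coinvariants of $\Z_{2}(i-1)$. The key observation is that, under the hypothesis that $i$ is odd or $i$ is even with $\sqrt{-1}\in F$, the cyclotomic character acts trivially on $\Z_{2}(i-1)$ after restriction to $G_{E_{w,\infty}}$ (resp. $G_{E_{\infty}}$); this is the same phenomenon that produces $(\ref{Tate lemma})$, and it yields $H_{1}(\mathrm{Gal}(E_{w,\infty}/F_{v,\infty}),\Z_{2}(i-1))\cong H_{v}(i-1)$ and $H_{1}(\mathrm{Gal}(E_{\infty}/F_{\infty}),\Z_{2}(i-1))\cong H(i-1)$, the Galois groups appearing because for any profinite group $\mathfrak{H}$ the homology $H_{1}(\mathfrak{H},\Z_{2})$ is its $2$-adically completed abelianization. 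Descending from these tower groups to $G_{v}$ and $G$ by means of the two Lyndon--Hochschild--Serre filtrations of $\mathrm{Gal}(E_{w,\infty}/F_{v})$ (resp. $\mathrm{Gal}(E_{\infty}/F)$), and using $\mathrm{cd}(\Gamma_{v})=\mathrm{cd}(\Gamma)=1$ together with $(\ref{Tate lemma})$ to kill the spurious terms, then identifies $H_{1}(G_{v},H^{2}(E_{w},\Z_{2}(i)))$ with $H_{v}(i-1)_{\Gamma_{v}}$ and $H_{1}(G,H^{0}(E,\Q_{2}/\Z_{2}(1-i))^{\vee})$ with $H(i-1)_{\Gamma}$. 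The only delicate point here is the bookkeeping of Tate twists and the verification that the $\Gamma_{v}$-coinvariants on the local side reassemble, via Shapiro's lemma, into the $\Gamma$-coinvariants on the global side.

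For $(2)$ I would invoke the two identifications already in the text: $\mathrm{coker}\,N_{i}\cong\ker N_{i}'$ from the snake lemma applied to $(\ref{diagramme commutive})$, and $\ker N_{i}'\cong\mathrm{coker}\big(H_{1}(G,\oplus_{w}H^{2}(E_{w},\Z_{2}(i)))\to H_{1}(G,H^{0}(E,\Q_{2}/\Z_{2}(1-i))^{\vee})\big)$ from $(\ref{exact commutaive diagram widetilde})$. Since $\oplus_{w\mid v}H^{2}(E_{w},\Z_{2}(i))=\mathrm{Ind}_{G_{v}}^{G}H^{2}(E_{w},\Z_{2}(i))$, Shapiro's lemma and $(1)$ turn this into $\mathrm{coker}\,N_{i}\cong\mathrm{coker}\big(\oplus_{v\in S_{f}}H_{v}(i-1)_{\Gamma_{v}}\to H(i-1)_{\Gamma}\big)$, where the map is the one induced by the localization maps, i.e. by the inclusions of decomposition groups into $H=\mathrm{Gal}(E_{\infty}/F_{\infty})^{\mathrm{ab}}_{(2)}$.

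The step I expect to be the main obstacle is the class-field-theoretic identification of this last cokernel. As twisting by $\Z_{2}(i-1)$ and passing to $\Gamma$-coinvariants are right exact, it suffices to compute $\mathrm{coker}\big(\oplus_{v}H_{v}\to H\big)$ before the twist. Here one must check that for a tamely ramified place $v\nmid2$ the image of $H_{v}$ in $H$ is exactly the $2$-part of the inertia subgroup, its Frobenius contribution already lying in $F_{v,\infty}$, while for $v\mid2$ it is the full local decomposition group; quotienting $H$ by all of these therefore imposes precisely ``unramified at every finite place and completely split above $2$'', so by the very definition of $L_{\infty}^{+}$ this quotient is $\mathrm{Gal}(L_{\infty}^{+}\cap E_{\infty}/F_{\infty})$, and re-applying $(i-1)_{\Gamma}$ gives the formula in $(2)$. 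Finally, writing $D=\mathrm{Gal}(L_{\infty}^{+}\cap E_{\infty}/F_{\infty})$, which is a finite $2$-group, the surjectivity criterion follows from Nakayama's lemma: $\Gamma\cong\Z_{2}$ is pro-$2$, so it acts unipotently on $D/\Phi(D)$, and the twist by $\chi^{i-1}\equiv1\pmod 2$ does not change this; hence $\gamma-1$ is surjective on $D$ only when $D=0$, so that $D(i-1)_{\Gamma}=0$ if and only if $D=0$, i.e. if and only if $L_{\infty}^{+}\cap E_{\infty}=F_{\infty}$.
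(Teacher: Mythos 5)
Your proposal is correct and takes essentially the same route as the paper: the paper's own proof of $(1)$ reduces to the vanishing $H^{1}(\Gamma,\Q_{2}/\Z_{2}(1-i))=H^{1}(\Gamma_{v},\Q_{2}/\Z_{2}(1-i))=0$ and then cites the descent argument of \cite[Proposition 2.1]{Assim 12}, which is precisely the pair of Lyndon--Hochschild--Serre filtrations of $\mathrm{Gal}(E_{w,\infty}/F_{v})$ and $\mathrm{Gal}(E_{\infty}/F)$ that you carry out, while $(2)$ is likewise deduced from $(1)$ via the identifications $\mathrm{coker}\,N_{i}\cong\ker N_{i}^{\prime}\cong\mathrm{coker}\,\theta$ already recorded before the statement. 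The only difference is that you supply the details (including the class-field-theoretic identification of the cokernel and the Nakayama argument for the surjectivity criterion) that the paper delegates to the reference.
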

	\begin{proof} Using the assumption and (\ref{Tate lemma}),  we have
		\begin{equation*}
		H^{1}(\Gamma,\Q_{2}/\Z_{2}(1-i))=0\;\mbox{and}\;
		H^{1}(\Gamma_{v},\Q_{2}/\Z_{2}(1-i))=0.
		\end{equation*}
		Then  $(1)$ can be proved with the same argument of  Proposition $2.1$
		of \cite{Assim 12}. The second assertion is a direct consequence of the first one.
	\end{proof}
	To prove  a genus formula for the positive \'{e}tale wild kernel, we give a description of
	\begin{equation*}
	\ker
	N_{i}\cong \mathrm{coker}\widetilde{\alpha}.
	\end{equation*}
	Consider the
	following exact commutative diagram
	\begin{equation}\label{Diagram}
	\xymatrix@=1pc{      &
		H_{2}(G,\oplus_{w\in S_{f}}H^{2}(E_{w},\Z_{2}(i)))\ar[d]^-{\kappa}\\
		&
		H_{2}(G,H^{0}(E,\Q_{2}/\Z_{2}(1-i))^{\vee})\ar[d]\\
		H_{1}(G,H^{2}_{+}(G_{E,S},\Z_{2}(i)))\ar[r]^-{\widetilde{\alpha}}\ar[d]^-{\wr}&
		H_{1}(G,\widetilde{\oplus}_{w\in S_{f}}H^{2}(E_{w},\Z_{2}(i)))\ar[d]\\
		H_{1}(G,H^{2}_{+}(G_{E,S},\Z_{2}(i)))\ar[r]^-{\alpha}&
		H_{1}(G,\oplus_{w\in S_{f}}H^{2}(E_{w},\Z_{2}(i)))\ar[d]^-{\theta}\\
		&  H_{1}(G,H^{0}(E,\Q_{2}/\Z_{2}(1-i))^{\vee})}
	\end{equation}
	Then we have an exact sequence
	\begin{equation}\label{exact sequence of kappa alpha}
	\xymatrix@=1pc{0\ar[r]&\ker\widetilde{\alpha}\ar[r]&\ker\alpha\ar[r]&\mathrm{coker}\,\kappa\ar[r]&
		\mathrm{coker}\widetilde{\alpha}\ar[r]&\mathrm{coker}\alpha\ar[r]&
		\mathrm{Im}\theta\ar[r]&0.}
	\end{equation}
	\begin{deft}\label{Xi}
		We define the module $	X^{(i)}_{E/F}$ as
		\begin{equation*}
		X^{(i)}_{E/F}:=\mathrm{Im}(\xymatrix@=1.5pc{\mathrm{coker}\,\kappa\ar[r]&
			\mathrm{coker}\widetilde{\alpha}}),
		\end{equation*}
		where $\kappa$ is the homology map
		\begin{equation*}
		\xymatrix@=2pc {H_{2}(G,\oplus_{w\in S_{f}}H^{2}(E_{w},\Z_{2}(i)))\ar[r]&
			H_{2}(G,H^{0}(E,\Q_{2}/\Z_{2}(1-i))^{\vee})}.
		\end{equation*}
	\end{deft}
	So
	\begin{equation*}
	|X^{(i)}_{E/F}|\leq |H_{2}(G,H^{0}(E,\Q_{2}/\Z_{2}(1-i))^{\vee})|.
	\end{equation*}
	We have the  following comparison between $|(WK_{2i-2}^{\mbox{\'{e}t},+}E)_{G}|$ and $|WK_{2i-2}^{\mbox{\'{e}t},+}F|$.
	\begin{pro}\label{Pro genus formula}
		For any integer $i\geq 2$, we have
		\begin{equation*}
		\frac{|(WK_{2i-2}^{\mbox{\'{e}t},+}E)_{G}|}{|WK_{2i-2}^{\mbox{\'{e}t},+}F|}=\frac{|X^{(i)}_{E/F}|.|\mathrm{coker\alpha}|}
		{|H_{1}(G,H^{0}(E,\Q_{2}/\Z_{2}(1-i))^{\vee})|}
		\end{equation*}
	\end{pro}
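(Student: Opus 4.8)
The plan is to extract the ratio directly from the vertical map $N_{i}$ in the diagram $(\ref{diagramme commutive})$. Since $i\geq 2$, the group $H^{2}(G_{F,S},\Z_{2}(i))$ is finite, so every module appearing in $(\ref{diagramme commutive})$, $(\ref{exact commutaive diagram widetilde})$ and $(\ref{Diagram})$ is a finite $2$-group and all the homology groups of the finite group $G$ with these coefficients are finite; this is what guarantees that the cardinalities below are well defined. As $N_{i}$ is then a homomorphism of finite abelian groups, we have
\begin{equation*}
\frac{|(WK_{2i-2}^{\mbox{\'{e}t},+}E)_{G}|}{|WK_{2i-2}^{\mbox{\'{e}t},+}F|}=\frac{|\ker N_{i}|}{|\mathrm{coker}\,N_{i}|}.
\end{equation*}
The snake lemma applied to $(\ref{diagramme commutive})$ has already supplied $\ker N_{i}\cong\mathrm{coker}\,\widetilde{\alpha}$ and $\mathrm{coker}\,N_{i}\cong\ker N_{i}^{\prime}$, so the first step is simply to substitute, obtaining $|\mathrm{coker}\,\widetilde{\alpha}|/|\ker N_{i}^{\prime}|$.

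Next I would evaluate $|\ker N_{i}^{\prime}|$ by means of the diagram $(\ref{exact commutaive diagram widetilde})$, which identifies $\ker N_{i}^{\prime}$ with the cokernel of the map $\theta$ at the foot of $(\ref{Diagram})$. Setting $D:=H_{1}(G,H^{0}(E,\Q_{2}/\Z_{2}(1-i))^{\vee})$, this gives $|\ker N_{i}^{\prime}|=|D|/|\mathrm{Im}\,\theta|$ and hence reduces the ratio to
\begin{equation*}
\frac{|(WK_{2i-2}^{\mbox{\'{e}t},+}E)_{G}|}{|WK_{2i-2}^{\mbox{\'{e}t},+}F|}=\frac{|\mathrm{coker}\,\widetilde{\alpha}|\cdot|\mathrm{Im}\,\theta|}{|D|}.
\end{equation*}

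The substance of the argument is then to establish the identity $|\mathrm{coker}\,\widetilde{\alpha}|\cdot|\mathrm{Im}\,\theta|=|X^{(i)}_{E/F}|\cdot|\mathrm{coker}\,\alpha|$, which I would read off the six-term exact sequence $(\ref{exact sequence of kappa alpha})$. Taking the alternating product of orders along it yields
\begin{equation*}
|\ker\widetilde{\alpha}|\cdot|\mathrm{coker}\,\kappa|\cdot|\mathrm{coker}\,\alpha|=|\ker\alpha|\cdot|\mathrm{coker}\,\widetilde{\alpha}|\cdot|\mathrm{Im}\,\theta|.
\end{equation*}
On the other hand, by Definition $\ref{Xi}$ the group $X^{(i)}_{E/F}$ is the image of the arrow $\mathrm{coker}\,\kappa\to\mathrm{coker}\,\widetilde{\alpha}$ of $(\ref{exact sequence of kappa alpha})$; chasing exactness at $\ker\alpha$ and at $\mathrm{coker}\,\kappa$ shows its kernel has order $|\ker\alpha|/|\ker\widetilde{\alpha}|$, so that $|X^{(i)}_{E/F}|=|\mathrm{coker}\,\kappa|\cdot|\ker\widetilde{\alpha}|/|\ker\alpha|$. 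Feeding this into the displayed alternating-product relation cancels the three terms $|\ker\widetilde{\alpha}|$, $|\ker\alpha|$, $|\mathrm{coker}\,\kappa|$ and leaves precisely the claimed identity, after which I substitute into the expression for the ratio to conclude.

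The individual steps are routine homological algebra, each a direct appeal to the snake lemma or to exactness of sequences already in place. I expect the only delicate point to be the order bookkeeping in the final paragraph: one must align the image-of-$\kappa$ description of $X^{(i)}_{E/F}$ with the alternating-product identity without sign errors, and confirm at the outset that all the relevant groups are finite so that the cardinality manipulations are legitimate.
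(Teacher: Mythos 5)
Your proposal is correct and follows essentially the same route as the paper: both reduce the ratio to $|\ker N_{i}|/|\mathrm{coker}\,N_{i}|$ via the snake lemma applied to diagram $(\ref{diagramme commutive})$, identify $\mathrm{coker}\,N_{i}$ with $\mathrm{coker}\,\theta$ through $(\ref{exact commutaive diagram widetilde})$, and then extract the relation $|X^{(i)}_{E/F}|\cdot|\mathrm{coker}\,\alpha|=|\mathrm{coker}\,\widetilde{\alpha}|\cdot|\mathrm{Im}\,\theta|$ from the sequence $(\ref{exact sequence of kappa alpha})$. The only cosmetic difference is that the paper reads this relation off the truncated exact sequence $0\to X^{(i)}_{E/F}\to\mathrm{coker}\,\widetilde{\alpha}\to\mathrm{coker}\,\alpha\to\mathrm{Im}\,\theta\to 0$, whereas you obtain it from the full six-term alternating product together with your (correct) computation of $|X^{(i)}_{E/F}|$.
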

	\begin{proof} On the one hand, by  $(\ref{exact sequence of kappa
			alpha})$ and the definition of $X^{(i)}_{E/F}$, we have an exact
		sequence
		\begin{equation*}
		\xymatrix@=1.5pc{0\ar[r]& X^{(i)}_{E/F}\ar[r]&
			\mathrm{coker}\widetilde{\alpha}\ar[r]&\mathrm{coker}\alpha\ar[r]&
			\mathrm{Im}\theta\ar[r]&0.}
		\end{equation*}
		On the other hand the exact commutative diagram $(\ref{exact
			commutaive diagram widetilde})$ shows that
		\begin{eqnarray*}
			% \nonumber to remove numbering (before each equation)
			\ker N^{\prime}_{i} &=& \mathrm{coker}(\xymatrix@=1.5pc{ H_{1}(G,\oplus_{w\in
					S_{f}}H^{2}(E_{w},\Z_{2}(i)))\ar[r]& H_{1}(G,H^{0}(G_{E,S},\Q_{2}/\Z_{2}(1-i))^{\vee})}  \\
			&=&\mathrm{coker}\theta.
		\end{eqnarray*}
		Since
		\begin{equation*}
		\mathrm{coker}N_{i}\cong \ker N_{i}^{\prime},\; \ker N_{i}\cong
		\mathrm{coker}\widetilde{\alpha}\;\;\mbox{and}\;\;
		|\mathrm{Im}\theta|.|\mathrm{coker}\theta|=|H_{1}(G,H^{0}(E,\Q_{2}/\Z_{2}(1-i))^{\vee})|,
		\end{equation*}
		we obtain
		\begin{equation*}
		\frac{|(WK_{2i-2}^{\mbox{\'{e}t},+}E)_{G}|}{|WK_{2i-2}^{\mbox{\'{e}t},+}F|}=\frac{|X^{(i)}_{E/F}|.|\mathrm{coker\alpha}|}
		{|H_{1}(G,H^{0}(E,\Q_{2}/\Z_{2}(1-i))^{\vee})|}.
		\end{equation*}
	\end{proof}
	Now we are going to compute
	$|\mathrm{coker\alpha}|$. For every $q\in \Z$,
	we have an isomorphism
	\begin{equation*}
	\widehat{H}^{q}(G,H^{2}_{+}(G_{E,S},\Z_{2}(i)))\cong
	\widehat{H}^{q+2}(G,H^{1}_{+}(E,\Z_{2}(i))),
	\end{equation*}
	given by cup-product (\cite[Proposition 3.1]{CKPS}),
	where $\widehat{H}^{\ast}(.,.)$ denotes the Tate cohomology. Then
	the commutative diagram
	\begin{equation*}
	\xymatrix@=2pc{
		H_{1}(G,H^{2}_{+}(G_{E,S},\Z_{2}(i)))\ar[d]^-{\wr}\ar[r]^-{\alpha}&
		H_{1}(G, \oplus_{w\in S_{f}}H^{2}(E_{w},\Z_{2}(i)))\ar[d]^-{\wr}\\
		\widehat{H}^{0}(G,H^{1}_{+}(E,\Z_{2}(i)))\ar[r]^-{\beta}& \widehat{H}^{0}(G,\oplus_{w\in
			S_{f}}H^{1}(E_{w},\Z_{2}(i)))}
	\end{equation*}
	shows that
	\begin{equation}\label{ coker alpha}
	\mathrm{coker}\alpha\cong  \mathrm{coker}\beta.
	\end{equation}
	\begin{deft}
		Let $H^{1,\mathcal{N}}_{+}(F,\Z_{2}(i))$ denote
		the kernel  of the map
		\begin{equation*}
		\xymatrix@=1.5pc{  H^{1}_{+}(F,\Z_{2}(i))\ar[r]&
			\oplus_{v\in S_{f}}
			\frac{H^{1}(F_{v},\Z_{2}(i))}{N_{G_{v}}H^{1}(E_{w},\Z_{2}(i))}}
		\end{equation*}
		where $N_{G_{v}}=\sum_{\sigma\in G_{v}}\sigma$ is the norm map.
	\end{deft}
	The isomorphism (\ref{ coker alpha})  shows that
	\begin{equation*}
	\mathrm{Im}(\alpha)\cong
	H^{1}_{+}(F,\Z_{2}(i))/H^{1,\mathcal{N}}_{+}(F,\Z_{2}(i)).
	\end{equation*}
	Hence
	\begin{equation*}
	|\mathrm{coker}\alpha|=\frac{\prod_{v\in
			S_{f}}|H_{1}(G_{v},H^{2}(E_{w},\Z_{2}(i)))|}{[H^{1}_{+}(F,\Z_{2}(i)):
		\mathcal{H}^{1,\mathcal{N}}_{+}(F,\Z_{2}(i))]}.
	\end{equation*}
	This yields our main result:
	\begin{theo}\label{theorem prin}
		Let $E/F$ be a Galois extension of number fields  with
		Galois group $G$. Then for every $i\geq 2$, we have
		\begin{equation*}
		\frac{|(WK_{2i-2}^{\mbox{\'{e}t},+}E)_{G}|}{|WK_{2i-2}^{\mbox{\'{e}t},+}F|}=\frac{|X^{(i)}_{E/F}|.\prod_{v\in
				S_{f}}|H_{1}(G_{v},H^{2}(E_{w},\Z_{2}(i)))|}
		{|H_{1}(G,H^{0}(E,\Q_{2}/\Z_{2}(1-i))^{\vee})|.[H^{1}_{+}(F,\Z_{2}(i)):
			\mathcal{H}^{1,\mathcal{N}}_{+}(F,\Z_{2}(i))]}.
		\end{equation*}
		\hfill $\square$
	\end{theo}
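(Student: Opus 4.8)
The plan is to read the theorem off from Proposition \ref{Pro genus formula} by supplying the value of $|\mathrm{coker}\alpha|$. That proposition already gives
\begin{equation*}
\frac{|(WK_{2i-2}^{\mbox{\'{e}t},+}E)_{G}|}{|WK_{2i-2}^{\mbox{\'{e}t},+}F|}=\frac{|X^{(i)}_{E/F}|\cdot|\mathrm{coker}\alpha|}{|H_{1}(G,H^{0}(E,\Q_{2}/\Z_{2}(1-i))^{\vee})|},
\end{equation*}
so it remains only to evaluate the order of the cokernel of the homology map
\begin{equation*}
\alpha:\;H_{1}(G,H^{2}_{+}(G_{E,S},\Z_{2}(i)))\longrightarrow H_{1}(G,\oplus_{w\in S_{f}}H^{2}(E_{w},\Z_{2}(i))).
\end{equation*}

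First I would translate $\alpha$ into degree zero. Since $G$ is finite, the cup-product isomorphism of \cite[Proposition 3.1]{CKPS} identifies $H_{1}(G,H^{2}_{+}(G_{E,S},\Z_{2}(i)))=\widehat{H}^{-2}(G,H^{2}_{+}(G_{E,S},\Z_{2}(i)))$ with $\widehat{H}^{0}(G,H^{1}_{+}(E,\Z_{2}(i)))$, and similarly for the local terms; the resulting commutative square shows $\mathrm{coker}\alpha\cong\mathrm{coker}\beta$, where $\beta$ is the corresponding degree-zero Tate cohomology map. Unwinding $\widehat{H}^{0}(G,-)=(-)^{G}/N_{G}(-)$ through the Shapiro decomposition of the target then exhibits $\mathrm{Im}\beta$, hence $\mathrm{Im}\alpha$, as the image of $H^{1}_{+}(F,\Z_{2}(i))$ inside $\oplus_{v\in S_{f}}H^{1}(F_{v},\Z_{2}(i))/N_{G_{v}}H^{1}(E_{w},\Z_{2}(i))$; by the very definition of the plus normic subgroup this gives $\mathrm{Im}\alpha\cong H^{1}_{+}(F,\Z_{2}(i))/H^{1,\mathcal{N}}_{+}(F,\Z_{2}(i))$, so that $|\mathrm{Im}\alpha|=[H^{1}_{+}(F,\Z_{2}(i)):\mathcal{H}^{1,\mathcal{N}}_{+}(F,\Z_{2}(i))]$.

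For the order of the target of $\alpha$ I would instead apply Shapiro's lemma directly: since $\oplus_{w\mid v}H^{2}(E_{w},\Z_{2}(i))=\mathrm{Ind}_{G_{v}}^{G}H^{2}(E_{w},\Z_{2}(i))$, one has $H_{1}(G,\oplus_{w\in S_{f}}H^{2}(E_{w},\Z_{2}(i)))\cong\oplus_{v\in S_{f}}H_{1}(G_{v},H^{2}(E_{w},\Z_{2}(i)))$, whence the target has order $\prod_{v\in S_{f}}|H_{1}(G_{v},H^{2}(E_{w},\Z_{2}(i)))|$. As all groups involved are finite, $|\mathrm{coker}\alpha|=|\mathrm{target}|/|\mathrm{Im}\alpha|$, so that
\begin{equation*}
|\mathrm{coker}\alpha|=\frac{\prod_{v\in S_{f}}|H_{1}(G_{v},H^{2}(E_{w},\Z_{2}(i)))|}{[H^{1}_{+}(F,\Z_{2}(i)):\mathcal{H}^{1,\mathcal{N}}_{+}(F,\Z_{2}(i))]};
\end{equation*}
substituting this into the ratio above yields the asserted formula.

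The step that demands the most care is the identification of $\mathrm{Im}\beta$ with the quotient of $H^{1}_{+}(F,\Z_{2}(i))$ by the plus normic subgroup: one must check that, under the Shapiro decomposition, the degree-zero map records exactly the cokernels of the local norms $N_{G_{v}}$ on $H^{1}(E_{w},\Z_{2}(i))$, and that the $G_{v}$-invariants match the local groups $H^{1}(F_{v},\Z_{2}(i))$ appearing in the definition of $H^{1,\mathcal{N}}_{+}(F,\Z_{2}(i))$. Once the definitions are aligned, the cup-product translation and the two applications of Shapiro's lemma are formal, $G$ being finite.
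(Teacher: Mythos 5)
Your proposal is correct and follows essentially the same route as the paper: starting from Proposition \ref{Pro genus formula}, transporting $\alpha$ to $\widehat{H}^{0}$ via the cup-product isomorphism of \cite[Proposition 3.1]{CKPS} so that $\mathrm{coker}\,\alpha\cong\mathrm{coker}\,\beta$ and $\mathrm{Im}\,\alpha\cong H^{1}_{+}(F,\Z_{2}(i))/H^{1,\mathcal{N}}_{+}(F,\Z_{2}(i))$, and computing the order of the target by Shapiro's lemma as $\prod_{v\in S_{f}}|H_{1}(G_{v},H^{2}(E_{w},\Z_{2}(i)))|$. The caveat you flag about matching the Shapiro decomposition with the local norm quotients in the definition of the plus normic subgroup is exactly the point the paper also passes over without further comment.
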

\begin{rem}
The above genus formula involves the order of the group $X^{(i)}_{E/F}$ which seems to be difficult to compute. If we make the following hypothesis $(\mathcal{H})$:\vskip 5pt
 "the morphism
		\begin{equation}\label{Hyp H}
 \kappa: \xymatrix@=1pc{	H_{2}(G,\oplus_{w\in S_{f}}H^{2}(E_{w},\Z_{2}(i)))\ar[r]& 	H_{2}(G,H^{0}(E,\Q_{2}/\Z_{2}(1-i))^{\vee})}
 \,\mbox{is surjective}",
	\end{equation}
then $X^{(i)}_{E/F}$ is trivial.\\
	This hypothesis  is satisfied if there is a prime $v_{0}$ of $F$ such that:
\begin{itemize}
	\item $H^{0}(E, \Q_{2}/\Z_{2}(1-i))\cong H^{0}(E_{w_{0}}, \Q_{2}/\Z_{2}(1-i))$ and
	\item $v_{0}$ is an  undecomposed $2$-adic prime, or $v_{0}$ is a totally and tamely ramified prime in $E/F$.
\end{itemize}
\end{rem}
	\begin{rem}
		The groups $H_{1}(G_{v},H^{2}(E_{w},\Z_{2}(i)))$ can be easily computed (at least if $i$ is odd, or $i$ is even and $\sqrt{-1}\in F$ by Proposition \ref{Proposition N surj}). The difficult part here is the norm index
		$[H^{1}_{+}(F,\Z_{2}(i)):
		\mathcal{H}^{1,\mathcal{N}}_{+}(F,\Z_{2}(i))]$. When $E/F$ is a relative quadratic extension, we obtain a  genus formula  involving the norm index
		$[D_{F}^{+(i)}:D_{F}^{+(i)}\cap N_{G}E^{\bullet}]$. Moreover, if $F$ has  one $2$-adic prime, we  use \cite[\S 4]{Assim mova} to give an explicit description of this norm index in terms of the ramification in $E/F$.
	\end{rem}
	\section{Relative quadratic extension case}
	In this section we focus on  relative quadratic extensions of number fields $E/F$  with Galois group $G$. For such extensions, we
	give a genus formula for the positive \'{e}tale wild kernel  involving the
	norm index $[D_{F}^{+(i)}:D_{F}^{+(i)}\cap N_{G}E^{\bullet}]$, where
	$D^{+(i)}_{F}$ is the  positive Tate kernel. \vskip 6pt
	First recall that  for every even  integer $i\geq 2$, Proposition \ref{Proposition DF and WF} says that the \'{e}tale wild kernel and the positive  \'{e}tale wild kernel coincide. A genus formula has been obtained
	by Kolster-Movahhedi \cite[Theorem 2.18]{Ko-Mo} for $i=2$, and by  Griffiths \cite[\S 4.3]{Griffiths}, as a generalization, for any even  integer $i\geq 2$.
	This  genus formula can be used to determine families of abelian $2$-extensions with trivial $2$-primary Hilbert  kernel  \cite{Ko-Mo 03,Lescop, Griffiths2}.
	\vskip 6pt
	Throughout this section we keep the notations of the previous sections and we assume that the integer $i\geq 2$ is odd.\vskip 6pt
	We need to calculate the order of $\mathrm{coker}\alpha $ (see Proposition \ref{Pro genus formula}). Since $G$ has order $2$,  we have the following exact commutative  diagram
	\begin{equation*}
	\xymatrix@=2pc{	\widehat{H}^{0}(G,H^{1}_{+}(E,\Z_{2}(i)))\ar[r]^-{\beta}\ar[d]^-{\wr}&
		\oplus_{v\in S_{f}}	\widehat{H}^{0}(G_{v},H^{1}(E_{w},\Z_{2}(i)))\ar[d]^-{\wr}\\
		H^{1}_{+}(F,\Z_{2}(i))/2/N_{G}(	H^{1}_{+}(E,\Z_{2}(i))/2)\ar[r]^-{\beta^{\prime}}&
		\oplus_{v\in S_{f}}	H^{1}(F_{v},\Z_{2}(i))/2/N_{G_{v}}(	H^{1}(E_{w},\Z_{2}(i))/2)}
	\end{equation*}	
	Likewise as in the global case, there exists a subgroup $D_{v}^{(i)}$ of $F_{v}^{\bullet}$
	containing $F_{v}^{\bullet^{2}}$ such that	
	\begin{equation*}
	H^{1}(F_{v},\Z_{2}(i))/2\cong D_{v}^{(i)}/F_{v}^{\bullet^{2}}
	\end{equation*}
	for each $v\in S_{f}$. Then, we have a natural isomorphism
	\begin{equation*}
	H^{1}(F_{v},\Z_{2}(i))/2/N_{G_{v}}(	H^{1}(E_{w},\Z_{2}(i))/2)\cong
	D_{v}^{(i)}/F_{v}^{\bullet^{2}}N_{G_{v}}(D_{w}^{(i)}),
	\end{equation*}
	where $w$ is a prime of $E$ above $v$. Hence,
	\begin{eqnarray*}
		\mathrm{coker\alpha}&\cong& 	\mathrm{coker\beta}\quad (\mbox{by (\ref{ coker alpha})}) \\
		&\cong& \mathrm{coker\beta^{\prime}}\\
		&\cong& \mathrm{coker}(\delta:\xymatrix@=1.5pc{	H^{1}_{+}(F,\Z_{2}(i))/2/N_{G}(	H^{1}_{+}(E,\Z_{2}(i))/2)\ar[r]& \oplus_{v\in S_{f}}D_{v}^{(i)}/F_{v}^{\bullet^{2}}N_{G_{v}}(D_{w}^{(i)}})).
	\end{eqnarray*}
	On the one hand, there exists a surjective map
	\begin{equation}\label{map surj}
	\xymatrix@=1.5pc{	H^{1}_{+}(F,\Z_{2}(i))/2/N_{G}(	H^{1}_{+}(E,\Z_{2}(i))/2)\ar@{->>}[r]&D_{F}^{+(i)}/F^{\bullet^{2}}N_{G}D_{E}^{+(i)}}
	\end{equation}
	Indeed, the exact sequence
	\begin{equation*}
	\xymatrix@=1.5pc{\oplus_{v\mid\infty}H^{0}(F_{v},\Z_{2}(i))\ar[r]& H^{1}_{+}(F,\Z_{2}(i))\ar[r]& H^{1}(F,\Z_{2}(i))\ar[r]& \oplus_{v\mid \infty}H^{1}(F_{v},\Z_{2}(i))}
	\end{equation*}
	induces the exact sequence
	\begin{equation*}
	\xymatrix@=1.5pc{ H^{1}_{+}(F,\Z_{2}(i))/2\ar[r]& H^{1}(F,\Z_{2}(i))/2\ar[r]& \oplus_{v\mid \infty}H^{1}(F_{v},\Z_{2}(i)).}
	\end{equation*}
	Then, by the definition of $D_{F}^{+(i)}$, we have a surjective
	map
	\begin{equation*}
	\xymatrix@=1.5pc{ H^{1}_{+}(F,\Z_{2}(i))/2\ar[r]&
		D_{F}^{+(i)}/F^{\bullet^{2}}\ar[r]&0.}
	\end{equation*}
	Hence, the surjectivity of the map (\ref{map surj}) follows from the exact commutative diagram
	\begin{equation*}
	\xymatrix@=2pc{ H^{1}_{+}(E,\Z_{2}(i))/2\ar[r]\ar[d]^-{N_{G}}&
		D_{E}^{+(i)}/E^{\bullet^{2}}\ar[r]\ar[d]^-{N_{G}}&0 \\
		H^{1}_{+}(F,\Z_{2}(i))/2\ar[r]\ar@{->>}[d]&
		D_{F}^{+(i)}/F^{\bullet^{2}}\ar[r]\ar@{->>}[d]&0\\
		H^{1}_{+}(F,\Z_{2}(i))/2/N_{G}(	H^{1}_{+}(E,\Z_{2}(i))/2)\ar[r]&D_{F}^{+(i)}/F^{\bullet^{2}}N_{G}D_{E}^{+(i)}\ar[r]&0}
	\end{equation*}	
	On the other hand, 	since $i$ is odd,
	the canonical sujection map
	\begin{equation*} \xymatrix@=2pc{\psi^{(i)}_{v}:\;D_{v}^{(i)}/F_{v}^{\bullet^{2}}N_{G_{v}}(D_{w}^{(i)})\ar@{->>}[r]& D_{v}^{(i)}/D_{v}^{(i)}\cap N_{G_{v}}(E_{w}^{\bullet})}
	\end{equation*}	
	is an isomorphism, as a consequence of \cite[Lemma 4.2.1]{Griffiths}. Therefore,
	\begin{equation*}
	\mathrm{Im}(\delta) \cong
	\mathrm{Im}(\xymatrix@=1.5pc{D_{F}^{+(i)}/F^{\bullet^{2}}N_{G}D_{E}^{+(i)}\ar[r]&	\oplus_{v\in S_{f}} D_{v}^{(i)}/D_{v}^{(i)}\cap N_{G_{v}}(E_{w}^{\bullet})}),
	\end{equation*}
	it follows that
	\begin{eqnarray*}
		|\mathrm{Im}(\delta)|&=& [D^{+(i)}_{F}:D^{+(i)}_{F}\cap \cap_{v\in S_{f}} N_{G_{v}}E_{w}^{\bullet} ]\\
		&=& [D^{+(i)}_{F}:D^{+(i)}_{F}\cap \cap_{v} N_{G_{v}}E_{w}^{\bullet} ]
	\end{eqnarray*}
	where, in the last equality, $v$ runs through all  places of $F$. Then, using the Hasse norm theorem ($G$ is cyclic), we get
	\begin{equation*}
	|\mathrm{Im}(\delta)|=[D^{+(i)}_{F}:D^{+(i)}_{F}\cap N_{G}E^{\bullet} ].
	\end{equation*}
	
	Therefore, we obtain
	\begin{equation*}
	|\mathrm{coker}\alpha|=\frac{\prod_{v\in
			S_{f}}|H_{1}(G_{v},H^{2}(E_{w},\Z_{2}(i)))|}{[D^{+(i)}_{F}:D^{+(i)}_{F}\cap N_{G}E^{\bullet} ]}.
	\end{equation*}
	Moreover, the order of the group $X^{(i)}_{E/F}$ (see Definition \ref{Xi}) is at most  $2$:
	\begin{equation*}
	|X^{(i)}_{E/F}|=2^{t},\; t\in\{0,1\}.
	\end{equation*}
	Indeed, by the definition of $X^{(i)}_{E/F}$, we have
	\begin{equation*}
	|X^{(i)}_{E/F}|\leq |H_{2}(G,H^{0}(E,\Q_{2}/\Z_{2}(1-i))^{\vee})|\leq 2,
	\end{equation*}
	since $G$ is cyclic of order $2$.
	Hence, by Proposition \ref{Pro genus formula}, we get
		\begin{equation*}
		\frac{|(WK_{2i-2}^{\mbox{\'{e}t},+}E)_{G}|}{|WK_{2i-2}^{\mbox{\'{e}t},+}F|}=\frac{2^{t}\cdot\prod_{v\in
				S_{f}}|H_{1}(G_{v},H^{2}(E_{w},\Z_{2}(i)))|}
		{|H_{1}(G,H^{0}(E,\Q_{2}/\Z_{2}(1-i))^{\vee})|\cdot[D^{+(i)}_{F}:D^{+(i)}_{F}\cap N_{G}E^{\bullet} ]}
		\end{equation*}
		where $t\in\{0,1\}$.
		\vskip 6pt	
	First, assume that  $E\subseteq F_{\infty}$. Then for every finite prime $v$, we have
	\begin{equation*}
	H_{1}(G_{v},H^{2}(E_{w},\Z_{2}(i)))=0,
	\end{equation*}
	by Proposition \ref{Proposition N surj}.
	Since $G$ is cyclic, we also have
	\begin{equation*}
	|H_{2}(G_{v},H^{2}(E_{w},\Z_{2}(i)))|=| H_{1}(G_{v},H^{2}(E_{w},\Z_{2}(i)))|=1,
	\end{equation*}
	and then, using the commutative diagram (\ref{Diagram}), we see that
	\begin{equation*}
	\mathrm{coker}\widetilde{ \alpha}=0.
	\end{equation*}
	Then the map
	\begin{equation*}
	\xymatrix@=2pc{N_{i}: (	WK_{2i-2}^{\mbox{\'{e}t},+}E)_{G}\ar[r]& WK_{2i-2}^{\mbox{\'{e}t},+}F}
	\end{equation*}
	is an isomorphism, as a consequence of Proposition \ref{Proposition N surj} and the fact that  $\ker N_{i}\cong  \mathrm{coker}\widetilde{ \alpha}$.	\vskip 6pt	
	Now, if  $E\nsubseteq F_{\infty}$, then
	\begin{equation*}
	|H_{1}(G,H^{0}(E,\Q_{2}/\Z_{2}(1-i))^{\vee})|=2
	\end{equation*}
	and if $v\nmid 2$ is a finite ramified prime in $E/F$ or $v$ is a $2$-adic prime such that  $E_{w}\cap F_{v,\infty}\neq E_{w}$, then
	\begin{equation*}
	|H_{1}(G_{v},H^{2}(E_{w},\Z_{2}(i)))|=2.
	\end{equation*}
	Since $G$ is cyclic, $|(WK_{2i-2}^{\mbox{\'{e}t},+}E)_{G}|=|(WK_{2i-2}^{\mbox{\'{e}t},+}E)^{G}|$. 	We can now formulate the genus formula for a relative quadratic extension:
	\begin{pro}
		Let $E/F$ be a relative quadratic extension of number
		fields with Galois group $G$ and  let $R_{E/F}$ be the set of  finite  primes tamely ramified in $E/F$ or $2$-adic primes such that $E_{w}\cap F_{v,\infty}\neq E_{w}$.
		Then for any odd positive integer $i\geq 2$
		\begin{enumerate}[label=(\roman*)]
			\item if $E\subseteq F_{\infty}$ then the positive \'{e}tale wild kernel satisfies Galois codescent, and
			\item if $E\nsubseteq F_{\infty}$,
			\begin{equation*} \frac{|(WK_{2i-2}^{\mbox{\'{e}t},+}E)^{G}|}{|WK_{2i-2}^{\mbox{\'{e}t},+}F|}=\frac{2^{r(E/F)-1+t}}{
				[D_{F}^{+(i)}:D_{F}^{+(i)}\cap N_{G}E^{\bullet}]}
			\end{equation*}
			where $r(E/F)=|R_{E/F}|$ and $t\in\{0,1\}$.	
		\end{enumerate}
	\hfill$\square$
	\end{pro}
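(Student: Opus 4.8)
The plan is to specialize the genus formula of Theorem~\ref{theorem prin} to the case $G\cong\Z/2$ and to read off each of its factors, relying on the local computations of Proposition~\ref{Proposition N surj} together with the elementary fact that for a finite module $M$ over a cyclic group $G=\langle\sigma\rangle$ one has $|M^{G}|=|M_{G}|$ (the endomorphism $\sigma-1$ of the finite group $M$ has kernel and cokernel of equal order, so that $|\ker(\sigma-1)|=|\mathrm{coker}(\sigma-1)|$). Since $i$ is odd, Proposition~\ref{Proposition N surj} applies throughout and gives $H_{1}(G_{v},H^{2}(E_{w},\Z_{2}(i)))\cong H_{v}(i-1)_{\Gamma_{v}}$, a group that is nontrivial (of order $2$) precisely when $E_{w}\not\subseteq F_{v,\infty}$, and is otherwise trivial.

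For part~(i) I would observe that $E\subseteq F_{\infty}$ forces $E_{w}\subseteq F_{v,\infty}$ for every finite place $v$, so that each factor $H_{1}(G_{v},H^{2}(E_{w},\Z_{2}(i)))$ vanishes; cyclicity of $G$ then gives $|H_{2}(G_{v},H^{2}(E_{w},\Z_{2}(i)))|=|H_{1}(G_{v},H^{2}(E_{w},\Z_{2}(i)))|=1$ as well. Feeding this into the diagram~(\ref{Diagram}) kills the outer columns and yields $\mathrm{coker}\,\widetilde{\alpha}=0$, hence $\ker N_{i}=0$. On the other hand $E\subseteq F_{\infty}$ means $E_{\infty}=F_{\infty}$, so $L_{\infty}^{+}\cap E_{\infty}=L_{\infty}^{+}\cap F_{\infty}=F_{\infty}$ and Proposition~\ref{Proposition N surj} makes $N_{i}$ surjective; combining the two shows that $N_{i}$ is an isomorphism, i.e. Galois codescent holds.

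For part~(ii) I would insert the numerical values into the quadratic form of the genus formula already obtained from Theorem~\ref{theorem prin} and Definition~\ref{Xi},
\[
\frac{|(WK_{2i-2}^{\mbox{\'{e}t},+}E)_{G}|}{|WK_{2i-2}^{\mbox{\'{e}t},+}F|}=\frac{2^{t}\cdot\prod_{v\in S_{f}}|H_{1}(G_{v},H^{2}(E_{w},\Z_{2}(i)))|}{|H_{1}(G,H^{0}(E,\Q_{2}/\Z_{2}(1-i))^{\vee})|\cdot[D^{+(i)}_{F}:D^{+(i)}_{F}\cap N_{G}E^{\bullet}]},
\]
with $t\in\{0,1\}$ forced by $|X^{(i)}_{E/F}|\leq|H_{2}(G,H^{0}(E,\Q_{2}/\Z_{2}(1-i))^{\vee})|\leq 2$ for $G$ cyclic of order $2$. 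When $E\nsubseteq F_{\infty}$ one has $E_{\infty}\neq F_{\infty}$, whence $|H_{1}(G,H^{0}(E,\Q_{2}/\Z_{2}(1-i))^{\vee})|=2$ by Proposition~\ref{Proposition N surj}. The key step is the identification $\prod_{v\in S_{f}}|H_{1}(G_{v},H^{2}(E_{w},\Z_{2}(i)))|=2^{r(E/F)}$: a split prime has trivial $G_{v}$; an inert prime $v\nmid 2$ gives an unramified quadratic $E_{w}/F_{v}$, which lies in the unramified $\Z_{2}$-extension $F_{v,\infty}$ and so contributes a trivial factor; the remaining factors, each equal to $2$, occur exactly at the tamely ramified primes $v\nmid 2$ and at the $2$-adic primes with $E_{w}\cap F_{v,\infty}\neq E_{w}$, that is, precisely at the places counted by $R_{E/F}$. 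Substituting $2^{t}\cdot 2^{r(E/F)}/(2\cdot[D^{+(i)}_{F}:D^{+(i)}_{F}\cap N_{G}E^{\bullet}])$ gives the exponent $r(E/F)-1+t$, and replacing $(WK_{2i-2}^{\mbox{\'{e}t},+}E)_{G}$ by $(WK_{2i-2}^{\mbox{\'{e}t},+}E)^{G}$ via the equality of invariant and coinvariant orders completes the formula.

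The main obstacle is the local bookkeeping at the dyadic places in part~(ii). Away from $2$ the cyclotomic $\Z_{2}$-extension is unramified, so the dichotomy ramified versus unramified cleanly separates the trivial factors from those equal to $2$; but at a $2$-adic place $F_{v,\infty}$ is genuinely ramified, and one must verify carefully that $H_{v}(i-1)_{\Gamma_{v}}$ is nontrivial exactly under the condition $E_{w}\cap F_{v,\infty}\neq E_{w}$ recorded in the definition of $R_{E/F}$. This is where Proposition~\ref{Proposition N surj} and the hypothesis that $i$ is odd are essential.
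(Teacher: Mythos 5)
Your treatment of part (i) and of the local bookkeeping in part (ii) follows the paper's route: vanishing of the local factors when $E\subseteq F_{\infty}$, the diagram (\ref{Diagram}) killing $\mathrm{coker}\,\widetilde{\alpha}$, the count $\prod_{v\in S_{f}}|H_{1}(G_{v},H^{2}(E_{w},\Z_{2}(i)))|=2^{r(E/F)}$ via $H_{v}(i-1)_{\Gamma_{v}}$, the bound $|X^{(i)}_{E/F}|\leq 2$, and the identity $|(WK_{2i-2}^{\mbox{\'{e}t},+}E)_{G}|=|(WK_{2i-2}^{\mbox{\'{e}t},+}E)^{G}|$ for $G$ cyclic. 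All of that is sound and matches the paper.

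There is, however, a genuine gap at the step you describe as ``insert the numerical values into the quadratic form of the genus formula already obtained from Theorem~\ref{theorem prin}.'' The formula you display, with denominator $[D^{+(i)}_{F}:D^{+(i)}_{F}\cap N_{G}E^{\bullet}]$, is \emph{not} what Theorem~\ref{theorem prin} gives: the theorem's denominator is the index $[H^{1}_{+}(F,\Z_{2}(i)):\mathcal{H}^{1,\mathcal{N}}_{+}(F,\Z_{2}(i))]$. The entire point of the quadratic section of the paper is to prove that, for $i$ odd and $G$ of order $2$, this index equals the norm index of the positive Tate kernel, equivalently that
\begin{equation*}
|\mathrm{coker}\,\alpha|=\frac{\prod_{v\in S_{f}}|H_{1}(G_{v},H^{2}(E_{w},\Z_{2}(i)))|}{[D^{+(i)}_{F}:D^{+(i)}_{F}\cap N_{G}E^{\bullet}]}.
\end{equation*}
This is not a formal substitution: the paper rewrites $\mathrm{coker}\,\alpha\cong\mathrm{coker}\,\beta$ as the cokernel of a map $\delta$ of $\bmod\,2$ groups, constructs a surjection $H^{1}_{+}(F,\Z_{2}(i))/2/N_{G}(H^{1}_{+}(E,\Z_{2}(i))/2)\twoheadrightarrow D_{F}^{+(i)}/F^{\bullet^{2}}N_{G}D_{E}^{+(i)}$ compatible with the local maps, invokes Griffiths' Lemma 4.2.1 to show that the local surjections $D_{v}^{(i)}/F_{v}^{\bullet^{2}}N_{G_{v}}(D_{w}^{(i)})\to D_{v}^{(i)}/D_{v}^{(i)}\cap N_{G_{v}}(E_{w}^{\bullet})$ are isomorphisms precisely because $i$ is odd, and finally applies the Hasse norm theorem to replace the intersection of local norm conditions by the global norm index. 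None of this appears in your proposal; without it you have only the formula of Theorem~\ref{theorem prin}, and the passage to $[D^{+(i)}_{F}:D^{+(i)}_{F}\cap N_{G}E^{\bullet}]$ — which is the new content of the proposition — is unproved.
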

Recall that under the hypothesis $(\mathcal{H})$  the group $X_{E/F}^{(i)}$ is trivial. Moreover, if $E/F$ is a relative quadratic extension of number
		fields, the hypothesis $(\mathcal{H})$ is satisfied precisely when the set $R_{E/F}$ is nonempty. We obtain
	\begin{coro}\label{Corollary quadratic}
		Let $E/F$ be a relative quadratic extension of number fields such that
		$E\nsubseteq F_{\infty}$ with Galois group $G$. If $R_{E/F}\neq \emptyset$ then we have
		\begin{equation*}	\frac{|(WK_{2i-2}^{\mbox{\'{e}t},+}E)^{G}|}{|WK_{2i-2}^{\mbox{\'{e}t},+}F|}=\frac{2^{r(E/F)-1}}{
			[D_{F}^{+(i)}:D_{F}^{+(i)}\cap N_{G}E^{\bullet}]}.
		\end{equation*}
	\end{coro}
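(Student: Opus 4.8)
The plan is to read the Corollary off from the relative-quadratic genus formula of the preceding Proposition, in which the only indeterminate quantity is the exponent $t$, with $2^{t}=|X^{(i)}_{E/F}|$ and $t\in\{0,1\}$. Everything thus reduces to proving that $X^{(i)}_{E/F}$ is trivial as soon as $R_{E/F}\neq\emptyset$: for then $t=0$, and substituting into
\[
\frac{|(WK_{2i-2}^{\mbox{\'{e}t},+}E)^{G}|}{|WK_{2i-2}^{\mbox{\'{e}t},+}F|}=\frac{2^{r(E/F)-1+t}}{[D_{F}^{+(i)}:D_{F}^{+(i)}\cap N_{G}E^{\bullet}]}
\]
yields the stated identity. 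By Definition \ref{Xi}, $X^{(i)}_{E/F}$ is the image of $\operatorname{coker}\kappa$ in $\operatorname{coker}\widetilde{\alpha}$, so it suffices to show that $\kappa$ is surjective, i.e.\ that the hypothesis $(\mathcal{H})$ holds.

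To verify $(\mathcal{H})$ I would invoke the sufficient criterion recalled after Theorem \ref{theorem prin}: it is enough to produce one prime $v_{0}$ of $F$ such that (a) the restriction $H^{0}(E,\Q_2/\Z_2(1-i))\to H^{0}(E_{w_{0}},\Q_2/\Z_2(1-i))$ is an isomorphism, and (b) $v_{0}$ is an undecomposed $2$-adic prime or a totally and tamely ramified prime. I would take $v_{0}\in R_{E/F}$. Condition (b) is immediate from the definition of $R_{E/F}$: in a quadratic extension a tamely ramified finite prime is automatically totally (and tamely) ramified, while a $2$-adic prime with $E_{w_{0}}\cap F_{v_{0},\infty}\neq E_{w_{0}}$ cannot split (a split prime would satisfy $E_{w_{0}}=F_{v_{0}}\subseteq F_{v_{0},\infty}$), hence is undecomposed.

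The homological mechanism behind (a) is the following. Local duality (\ref{local duality}) gives $H^{2}(E_{w_{0}},\Z_2(i))\cong H^{0}(E_{w_{0}},\Q_2/\Z_2(1-i))^{\vee}$, and the $w_{0}$-component of the Poitou--Tate surjection $\bigoplus_{w}H^{2}(E_{w},\Z_2(i))\twoheadrightarrow H^{0}(E,\Q_2/\Z_2(1-i))^{\vee}$ is exactly the Pontryagin dual of that restriction map, hence is surjective. Since $v_{0}$ is undecomposed, $w_{0}$ is the unique place above it and $H^{2}(E_{w_{0}},\Z_2(i))$ is a $G$-direct summand of $\bigoplus_{w}H^{2}(E_{w},\Z_2(i))$; if (a) holds, this component is a $G$-isomorphism, so applying $H_{2}(G,-)$ shows that $\kappa$ is already surjective on the $w_{0}$-summand. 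This gives $(\mathcal{H})$, whence $\operatorname{coker}\kappa=0$ and $X^{(i)}_{E/F}=0$.

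The main obstacle is establishing (a), i.e.\ the equality $|H^{0}(E,\Q_2/\Z_2(1-i))|=|H^{0}(E_{w_{0}},\Q_2/\Z_2(1-i))|$ for $v_{0}\in R_{E/F}$. Writing the order of each $H^{0}$ as $2$ raised to the largest $c$ with $\chi^{\,i-1}\equiv 1\pmod{2^{c}}$ on the relevant (global, resp.\ local decomposition) Galois group, the inclusion $G_{E_{w_{0}}}\hookrightarrow G_{E}$ forces $|H^{0}(E,\Q_2/\Z_2(1-i))|\le|H^{0}(E_{w_{0}},\Q_2/\Z_2(1-i))|$ automatically, and the content is the reverse inequality. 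Here I would use that $i$ is odd, so $i-1$ is even and $\chi^{\,i-1}$ lands in the squares $1+8\Z_2$, together with the defining property of $R_{E/F}$ (tame ramification, resp.\ $E_{w_{0}}\not\subseteq F_{v_{0},\infty}$) to match the $2$-adic valuation of $\chi^{\,i-1}-1$ realised on the decomposition group at $w_{0}$ with the one realised globally. This local-versus-global comparison of cyclotomic behaviour is the genuinely arithmetic step; once it is in place the two reductions above are formal, and the bound $|X^{(i)}_{E/F}|\le|H_{2}(G,H^{0}(E,\Q_2/\Z_2(1-i))^{\vee})|\le 2$ (which already pins $t\in\{0,1\}$) now gives $t=0$ and the Corollary.
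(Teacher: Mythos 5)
Your skeleton coincides with the paper's: the Corollary is read off from the preceding Proposition by showing $X^{(i)}_{E/F}=0$ (hence $t=0$) whenever $R_{E/F}\neq\emptyset$, and the paper does this by asserting that, for quadratic extensions, the hypothesis $(\mathcal{H})$ holds precisely when $R_{E/F}\neq\emptyset$. Your reduction to the surjectivity of $\kappa$, and your verification of the second bullet of the remark's criterion (a tamely ramified finite prime in a quadratic extension is totally ramified; a $2$-adic prime with $E_{w_0}\cap F_{v_0,\infty}\neq E_{w_0}$ cannot split, hence is undecomposed), are both correct and consistent with the paper.

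The gap is in the first bullet, which you defer as ``the genuinely arithmetic step'': the isomorphism $H^{0}(E,\Q_2/\Z_2(1-i))\cong H^{0}(E_{w_0},\Q_2/\Z_2(1-i))$ is \emph{not} a consequence of $v_0\in R_{E/F}$, and the matching of $2$-adic valuations you hope for fails in general. Concretely, take $F=\Q$, $E=\Q(\sqrt{\ell})$ with $\ell$ a prime $\equiv 1\bmod 16$, and $i=3$. Then $\ell$ is tamely (hence totally) ramified, so $\ell\in R_{E/\Q}$, while $2$ splits (as $\ell\equiv 1\bmod 8$), so $R_{E/\Q}=\{\ell\}$. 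Globally $\chi(G_E)=\Z_2^{\times}$, whence $|H^{0}(E,\Q_2/\Z_2(-2))|=8$; locally $\chi(G_{E_{w_0}})$ is the closure of $\langle\ell\rangle\subseteq 1+16\Z_2$, whence $|H^{0}(E_{w_0},\Q_2/\Z_2(-2))|\geq 32$. (The inequality you get for free from $G_{E_{w_0}}\hookrightarrow G_E$ goes the wrong way: the local $H^{0}$ is always at least as large as the global one, and here it is strictly larger.) So the sufficient criterion of the remark is inapplicable at the only prime of $R_{E/\Q}$, and your argument does not establish $(\mathcal{H})$. To prove that $R_{E/F}\neq\emptyset$ forces $\kappa$ surjective when $G=\Z/2$, one must analyse directly the map $\widehat{H}^{-1}(G,H^{2}(E_{w_0},\Z_2(i)))\to\widehat{H}^{-1}(G,H^{0}(E,\Q_2/\Z_2(1-i))^{\vee})$ induced by the surjective (but generally non-bijective) $G$-module map, using the explicit action on these finite cyclic modules; the paper states the equivalence without proof, so this step must be supplied by an argument different from the one you propose.
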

	\begin{exe} Assume that $F$ has one $2$-adic prime and that the set $R_{E/F}\neq\emptyset$. Let $S=S_{2}\cup R_{E/F}$. Since $E/F$ is quadratic (hence cyclic), we have
		\begin{equation*}
			[D_{F}^{+(i)}:D_{F}^{+(i)}\cap N_{G}E^{\bullet}]=[D_{F}^{+(i)}:D_{F}^{+(i)}\cap\cap_{v\in S-S_{2}}N_{E_{w}/F_{v}}(E_{w}^{\bullet})]
		\end{equation*}
	by Hasse's local-global norm principle.\\
	Recall that a set $T$ of primes containing $S_{2}$ is $D_{F}^{+(i)}$-primitive for $(F,2)$ if the canonical map
	\begin{equation*}
	\xymatrix@=2pc{D_{F}^{+(i)}/F^{\bullet^{2}}\ar[r]& \oplus_{v\in T\backslash S_{2}}D^{(i)}_{v}/F_{v}^{\bullet^{2}}}
	\end{equation*}
is surjective \cite[Definition 4.4]{Assim mova}. Let $T$ be a maximal 	$D_{F}^{+(i)}$-primitive set for $(F,2)$ contained in $S$ and $t_{i}^{+}=|T\backslash S_{2}|$. Then using \cite[ Proposition 4.11]{Assim mova}, we have
	\begin{equation*}
		[D_{F}^{+(i)}:D_{F}^{+(i)}\cap N_{G}E^{\bullet}]=	[D_{F}^{+(i)}:D_{F}^{+(i)}\cap\cap_{v\in S-S_{2}}N_{E_{w}/F_{v}}(E_{w}^{\bullet})]= 2^{t^{+}_{i}}.
	\end{equation*}
	In fact, $t^{+}_{i}=\dim_{\mathbf{F}_2} \mathrm{Im}(\xymatrix@=1.5pc{D_{F}^{+(i)}/F^{\bullet^{2}}\ar[r]& \oplus_{v\in S\backslash S_{2}}D^{(i)}_{v}/F_{v}^{\bullet^{2}}})$.\\
		Now, let $E$ be a quadratic field such that $E$ is not contained  in the cyclotomic $\Z_{2}$-extension of $\Q$. Since the narrow class group of $\Q$ is trivial, the set $R_{E/\Q}\neq\emptyset$. Moreover,  $WK_{2i-2}^{\mbox{\'{e}t},+}\Q=0$,  by Proposition \ref{Prop ind}.
	By  Corollary \ref{Corollary quadratic} it follows that
		\begin{equation*}
		|(WK_{2i-2}^{\mbox{\'{e}t},+}E)^{G}|=2^{r(E/\Q)-1-t_{i}^{+}}
		\end{equation*}
		for all odd integer $i\geq 2$. 

Let  $rk_{2}(WK_{2i-2}^{\mbox{\'{e}t},+}E)$ denote the $2$-rank of the \'{e}tale positive wild kernel $WK_{2i-2}^{\mbox{\'{e}t},+}E$. The extension  $E/\Q$ being quadratic, we have
			\begin{equation*}
		|(WK_{2i-2}^{\mbox{\'{e}t},+}E)^{G}|=2^{rk_{2}(WK_{2i-2}^{\mbox{\'{e}t},+}E)}
		\end{equation*}
		(see e.g. \cite[Proposition 1.3]{Griffiths2}). Hence
			\begin{equation*}
		rk_{2}(WK_{2i-2}^{\mbox{\'{e}t},+}E)=r(E/\Q)-1-t_{i}^{+}.
		\end{equation*}	
		Since $\dim_{\F_{2}}D_{\Q}^{+(i)}/\Q^{\bullet^{2}}=1$ \cite[\S 6]{Assim mova},  $t_{i}^{+}\in\{0,1\}$. Therefore,
		\begin{equation*}
			rk_{2}(WK_{2i-2}^{\mbox{\'{e}t},+}E)=\begin{cases}
			r(E/\Q)-2,\, \mbox{if}\; T\backslash S_{2}\neq \emptyset\\
			          \,\\
				r(E/\Q)-1,\,\mbox{if}\;T\backslash S_{2}= \emptyset
			\end{cases}
		\end{equation*}	
	In particular, $ WK_{2i-2}^{\mbox{\'{e}t},+}E$ vanishes if and only if  $E$ is unramified  outside a set  $\{ 2,\infty, \ell\}$ with $\ell\equiv\pm 3\mod 8$.
	%\begin{equation*}
		%rk_{2}(WK_{2i-2}^{\mbox{\'{e}t},+}E)= 0.
	%\end{equation*}		
	\end{exe}
	
\end{document}